\newtheorem{thm}{Theorem}[section]
\newtheorem{corollary}[thm]{Corollary}
\newtheorem{lemma}[thm]{Lemma}
\newtheorem{proposition}[thm]{Proposition}
\newdefinition{definition}[thm]{Definition}
\newdefinition{example}[thm]{Example}
\newdefinition{remark}[thm]{Remark}
\newproof{proof}{Proof}
\newproof{pot}{Proof of Theorem \ref{mr1}}
\newcommand{\field}[1]{\mathbb{#1}}
\newcommand{\N }{\field{N}}
\DeclareMathOperator{\Hom}{Hom}
\DeclareMathOperator{\Ker}{Ker}
\DeclareMathOperator{\Max}{Max}
\DeclareMathOperator{\m}{\mathfrak{m}}
\DeclareMathOperator{\Ze}{Z}
\DeclareMathOperator{\Ann}{Ann}
\DeclareMathOperator{\nil}{Nil}
\begin{document}
\begin{frontmatter}
\title{Zaks' conjecture on rings with semi-regular proper homomorphic images}
\author{K. Adarbeh\fnref{fn1}}
\ead{khalidwa@kfupm.edu.sa}
\author{S. Kabbaj\corref{cor}\fnref{fn1}}\cortext[cor]{Corresponding author.}
\ead{kabbaj@kfupm.edu.sa}
\fntext[fn1]{This work was funded by Saudi NSTIP Research Award \# 14-MAT71-04.}
\address{Department of Mathematics and Statistics\\ King Fahd University of Petroleum \& Minerals\\ Dhahran 31261, Saudi Arabia}
\begin{abstract}
In this paper, we prove an extension of Zaks' conjecture on integral domains with semi-regular proper homomorphic images  (with respect to finitely generated ideals) to arbitrary rings (i.e., possibly with zero-divisors). The main result extends and recovers Levy's related result on Noetherian rings \cite[Theorem]{Levy} and Matlis' related result on Pr\"ufer domains \cite[Theorem]{M85}. It also globalizes Couchot's related result on chained rings \cite[Theorem 11]{Cou2003}.  New examples of rings with semi-regular proper homomorphic images stem from the main result via trivial ring extensions.
\end{abstract}

\begin{keyword}
 Semi-regular ring, IF-ring, coherent ring, arithmetical ring, quasi-Frobenius ring, self fp-injective ring, Pr\"ufer domain, Dedekind domain
 \MSC[2010] 13C10, 13C11, 13E05, 13F05, 13H10, 16A30,16A50,16A52
\end{keyword}

\end{frontmatter}

\section{Introduction}

\noindent Throughout this paper, all rings considered are commutative with identity and all modules are unital. A ring $R$ is \emph{coherent} if every finitely generated ideal of $R$ is finitely presented. The class of coherent rings includes strictly the classes of Noetherian rings, von Neumann regular rings, valuation domains, and semi-hereditary rings. The concept of coherence stemmed up from the study of coherent sheaves in algebraic geometry; and later  developed towards a full-fledged topic in commutative algebra under the influence of homology. For more details on coherence, see please \cite{G1,G2}.

In 1982, Matlis proved that a ring $R$ is coherent if and only if $\hom_{R}(M,N)$ is flat for any injective $R$-modules $M$ and $N$ \cite{M82}. In 1985, he defined a ring $R$ to be \emph{semi-coherent} if $\hom_{R}(M,N)$ is a submodule of a flat $R$-module for any injective $R$-modules $M$ and $N$. Then, inspired by this definition and von Neumann regularity, he defined a ring to be \emph{semi-regular} if any module can be embedded in a flat module. He then provided a connection of this notion with coherence; namely, a ring $R$ is semi-regular if and only if $R$ is coherent and $R_{ M}$ is semi-regular for every maximal ideal $ M$ of $R$. Moreover, he proved that in the class of reduced rings, semi-regularity coincides with von Neumann regularity; and under Noetherian assumption, semi-regularity coincides with self-injectivity \cite{M85}.  It is worth noting, at this point, that the notion of semi-regular ring was briefly mentioned by Sabbagh (1971) in \cite[Section 2]{Sabb} and studied
in non-commutative settings by Jain (1973) in \cite{Jain}, Colby (1975) in \cite{Col}, and Facchini \& Faith (1995) in \cite{FF}, among others, where it was always termed as IF ring. Also, it was extensively studied -under IF terminology- in valuation settings by Couchot in \cite{Cou2003}. Recall here that a semi-regular ring is self fp-injective \cite[Theorem 3.3]{Jain}.

A domain $R$ is \emph{Dedekind} if every ideal of $R$ is projective. In 1966, Levy proved a dual version for this result stating that, for a Noetherian ring $R$ (possibly with zero-divisors),  every proper homomorphic image of $R$ is self-injective if and only if
$R$ is a Dedekind domain or a principal ideal ring with descending chain condition or a local ring whose maximal ideal $ M$ has composition length 2 with $ M^{2}=0$ \cite[Theorem]{Levy}. In 1985, Matlis proved that if $R$ is a Pr\"ufer  domain, then $R/I$ is semi-regular for every nonzero finitely generated ideal $I$ of $R$ \cite[Proposition 5.3]{M85}. Then Abraham Zaks conjectured that the converse of this result should be true; i.e., an integral domain $R$ is  Pr\"ufer if and only if $R/I$ is semi-regular for every nonzero finitely generated ideal $I$ of $R$ . This was proved by Matlis in \cite[Theorem, p. 371]{M85}; extending thus Levy's theorem in the case of integral domains.

In this paper, we prove an extension of Zaks' conjecture on integral domains with semi-regular proper homomorphic images  (with respect to the finitely generated ideals) to arbitrary rings (i.e., possibly with zero-divisors). The main result (Theorem~\ref{mr1}) globalizes Couchot's related result on chained rings \cite[Theorem 11]{Cou2003}; and also extends and recovers Matlis' related result on Pr\"ufer domains (Corollary~\ref{zak}) and Levy's related result on Noetherian rings (Corollary~\ref{levy2}).  New examples of rings with semi-regular proper homomorphic images stem from the main result via trivial ring extensions.

For the reader's convenience, the following diagram of implications summarizes the relations among the main notions involved in this paper:

\begin{figure}[h!]
\centering
\[\setlength{\unitlength}{.6mm}
\begin{picture}(120,40)(0,-65)
\put(40,-30){\vector(0,-1){20}}
\put(40,-30){\vector(2,-1){40}}
\put(40,-30){\vector(-2,-1){40}}
\put(0,-50){\vector(0,-1){20}}
\put(80,-50){\vector(0,-1){20}}
\put(120,-30){\vector(-2,-1){40}}
\put(120,-30){\vector(0,-1){20}}
\put(120,-50){\vector(-2,-1){40}}
\put(40,-50){\vector(2,-1){40}}
\put(40,-50){\vector(-2,-1){40}}

\put(40,-30){\circle*{1.2}}\put(42,-29){\makebox(0,0)[b]{\scriptsize Quasi-Frobenius}}
\put(0,-50){\circle*{1.2}} \put(-4,-50){\makebox(0,0)[r]{\scriptsize Self-injective}}
\put(40,-50){\circle*{1.2}}\put(43,-55){\makebox(0,0)[t]{\scriptsize\bf Semi-regular}}
\put(80,-50){\circle*{1.2}} \put(82,-44){\makebox(0,0)[b]{\scriptsize  Noetherian}}
\put(0,-70){\circle*{1.2}} \put(-2,-70){\makebox(0,0)[r]{\scriptsize Self fp-injective}}
\put(80,-70){\circle*{1.2}}\put(78,-72){\makebox(0,0)[t]{\scriptsize\bf  Coherent}}
\put(120,-30){\circle*{1.2}}\put(122,-30){\makebox(0,0)[l]{\scriptsize  Dedekind}}
\put(120,-50){\circle*{1.2}}\put(122,-50){\makebox(0,0)[l]{\scriptsize  Pr\"ufer domain}}
\end{picture}\]
\end{figure}
\bigskip

Throughout,  for a ring $R$, let $Q(R)$ denote its total ring of quotients, $\Ze(R)$ the set of its zero-divisors, and $\Max(R)$ the set of its maximal ideals. For an ideal $I$ of $R$, $\Ann(I)$ will denote the annihilator of $I$.

\section{Main result}\label{mr}

\noindent A ring $R$ is \emph{arithmetical} if every finitely generated ideal of $R$ is locally principal \cite{Fu,Jen,KLS}; and $R$ is a \emph{chained} ring if $R$ is local and arithmetical \cite{BG,BG2,H}. In the domain setting, these two notions coincide with Pr\"ufer and valuation domains, respectively. In \cite{Cou2003}, Couchot investigated  semi-regularity (termed as IF-ring) in the class of chained rings (termed as valuation rings). It is worthwhile recalling that, in a Noetherian setting, semi-regularity coincides with self-injectivity \cite[Proposition 3.4]{M85}; and under coherence, it coincides with the double annihilator condition (i.e., $\Ann(\Ann(I))=\Ann(I)$, for every finitely generated ideal $I$) \cite[Proposition 4.1]{M85}.

Throughout, for a ring $R$ and an $R$-module $M$, $l(M)$ will denote the \emph{composition length} of $M$ ($=\infty$, if $M$ has no composition series).

A ring $R$ is called \emph{residually semi-regular} if $R/I$ is semi-regular, for every nonzero finitely generated proper ideal $I$ of $R$. Levy (resp., Matlis) proved that a Noetherian domain (resp., a domain) $R$ is residually semi-regular if and only if $R$ is Dedekind (resp., Pr\"ufer) \cite[Theorem(1)]{Levy} and \cite[Proposition 5.3]{M85}. In the non-domain setting, Levy's result \cite[Theorem(2)\&(3)]{Levy} also ensures that a Noetherian ring with zero-divisors is residually semi-regular if and only if $R$ is principal Artinian or $(R, M)$ is local with $ M^{2}=0$ and $l( M)=2$. Also, recall Couchot's result that a chained ring is residually semi-regular \cite[Theorem 11]{Cou2003}.

In order to proceed to the main result, we need the notion of residual coherence. Namely, a ring $R$ is \emph{residually coherent} if $R/I$ is coherent, for every nonzero finitely generated proper ideal $I$ of $R$. Obviously, coherent rings and residually semi-regular rings are residually coherent. Also, note that while chained rings are always residually coherent by \cite[Corollary II.14]{Cou2003CA} (or \cite[Theorem 11]{Cou2003}), arithmetical rings are not; see \cite[Theorem II.15]{Cou2003CA} and Example~\ref{rc}.

The following result extends (and solves) Zaks' conjecture to arbitrary rings (i.e., possibly with zero-divisors), generalizing thus Levy's, Matlis', and Couchot's aforementioned results. Recall, for convenience, that a semi-regular ring, being equal to its total ring of quotients, is always a Pr\"ufer ring.

\begin{thm}\label{mr1}
Let $R$ be a ring and consider the following conditions:
\begin{enumerate}
\item[$(\mathcal{C}_{1})$] $(R, M)$ is local with $M^{2}=0$ and $l( M)\leq2$.
\item[$(\mathcal{C}_{2})$] $R$ is arithmetical and residually coherent, and $R_{M}$ is a semi-regular ring for every $M\in\Max(R)$ such that $\Ker(R\rightarrow R_{M})\neq0$.
\end{enumerate}
Then, $R$ is residually semi-regular if and only if $R$ satisfies $(\mathcal{C}_{1})$ or $(\mathcal{C}_{2})$.
\end{thm}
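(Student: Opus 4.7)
The plan is to prove the biconditional by treating the two implications separately, with essentially all the technical work concentrated on a single step of the necessity direction.

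\textbf{Sufficiency.} If $(\mathcal{C}_{1})$ holds then $R$ has composition length at most $3$, hence is Artinian, and Levy's theorem \cite[Theorem(3)]{Levy} gives that every proper homomorphic image is self-injective; since self-injectivity and semi-regularity coincide under Noetherian hypotheses \cite[Proposition 3.4]{M85}, $R$ is residually semi-regular. If $(\mathcal{C}_{2})$ holds, let $I$ be a nonzero finitely generated proper ideal. Residual coherence gives that $R/I$ is coherent, so by Matlis' local criterion \cite{M85} it suffices to show that $R_{M}/I_{M}\cong (R/I)_{M/I}$ is semi-regular for each maximal $M\supseteq I$. Since $R$ is arithmetical, $R_{M}$ is chained. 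If $\Ker(R\to R_{M})\neq 0$ then $R_{M}$ is semi-regular by hypothesis, and Couchot's theorem \cite[Theorem 11]{Cou2003} that chained rings are residually semi-regular covers both $I_{M}=0$ (giving $R_{M}/I_{M}=R_{M}$) and $I_{M}\neq 0$. If $\Ker(R\to R_{M})=0$ then $I_{M}$ is automatically a nonzero proper ideal of the chained ring $R_{M}$, and Couchot's theorem applies again.

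\textbf{Necessity.} Assume $R$ is residually semi-regular and $(\mathcal{C}_{1})$ fails; the goal is to derive $(\mathcal{C}_{2})$. Residual coherence is immediate from semi-regular $\Rightarrow$ coherent. For the localization hypothesis, fix $M\in\Max(R)$ with $\Ker(R\to R_{M})\neq 0$, pick $0\neq a$ in this kernel, and let $s\in R\setminus M$ with $sa=0$. Then $a$ is a nonzero zero-divisor, hence a non-unit, so $(a)$ is a nonzero finitely generated proper ideal and $R/(a)$ is semi-regular by hypothesis. Since $(a)_{M}=0$, the natural map $R_{M}\to (R/(a))_{M/(a)}$ is an isomorphism, and Matlis' local criterion \cite{M85} says semi-regularity passes to localizations at maximal ideals; hence $R_{M}$ is semi-regular.

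The chief obstacle is showing that $R$ is arithmetical, i.e.\ that $R_{M}$ is chained at every maximal $M$. My plan is to fix $a,b\in R$ generating a nonzero proper finitely generated ideal $I=(a,b)$ and exploit that $R/I$ is semi-regular, hence self fp-injective and (being coherent) satisfies the double annihilator identity $\Ann(\Ann(J))=J$ for every finitely generated ideal $J$ \cite[Proposition 4.1]{M85}. Pushing these identities into $R_{M}$ -- where, by the step just completed, $R_{M}$ is itself semi-regular as soon as the local kernel is nontrivial, so Couchot's structure theory of chained semi-regular (IF-) rings \cite{Cou2003} becomes available -- one should force $(a)_{M}\subseteq (b)_{M}$ or $(b)_{M}\subseteq (a)_{M}$. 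The failure of $(\mathcal{C}_{1})$ will be used precisely to exclude the exceptional small-length Artinian local configuration in which such a chain comparison may fail. I expect this step to parallel Matlis' Pr\"ufer-domain argument \cite[Theorem p.\ 371]{M85}, with the extra bookkeeping required to control zero-divisors and to distinguish between the two classes of maximal ideals featuring in $(\mathcal{C}_{2})$.
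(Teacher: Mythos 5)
Your sufficiency argument is essentially the paper's: reduce modulo $I$, use residual coherence plus Matlis' local criterion, split the maximal ideals according to whether $IR_{M}$ survives, and invoke Couchot's theorem on chained rings or the hypothesis on $R_{M}$ accordingly. (The paper handles $(\mathcal{C}_{1})$ self-containedly via a small lemma --- a local ring with square-zero maximal ideal is semi-regular iff $l(M)\leq 1$ --- rather than by citing Levy, but your citation is legitimate.) The first two steps of your necessity direction (residual coherence; semi-regularity of $R_{M}$ when $\Ker(R\to R_{M})\neq 0$, via $R_{M}\cong (R/aR)_{M/aR}$) also match the paper.

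The gap is the arithmetical property, which is the entire technical content of the theorem, and your treatment of it is a plan rather than a proof. Two concrete problems. First, your plan is circular where it leans on Couchot: you propose to use ``Couchot's structure theory of chained semi-regular (IF-) rings'' inside $R_{M}$ to force $(a)_{M}$ and $(b)_{M}$ to be comparable, but that theory presupposes that $R_{M}$ is chained, which is exactly what you are trying to prove; moreover $R_{M}$ is only known to be semi-regular when $\Ker(R\to R_{M})\neq 0$, so for faithful localizations even that starting point is unavailable. Second, ``extra bookkeeping to control zero-divisors'' badly underestimates the obstruction: Matlis' domain argument rests on $(a)\cap(b)\neq 0$ holding automatically, and the whole difficulty in the presence of zero-divisors is the case $(a)\cap(b)=0$. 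The paper resolves it through a chain of lemmas showing that in a local residually semi-regular ring: finitely generated ideals with nonzero intersection are comparable (via the identities $(I:(I:J))=J$ and $(I:J)+(I:K)=(I:J\cap K)$ and a Nakayama argument); every finitely generated ideal is principal or two-generated with square zero; incomparable principal ideals have equal annihilators; hence every finitely generated ideal is quasi-projective (an fqp-ring). Only then does the classification of local fqp-rings from \cite{AJK} (if not chained, then $\nil^{2}=0$ and $\Ze=\nil$) combine with a separate Gaussian-type lemma to yield the contradiction $M^{2}R_{M}=0$ in the non-chained case. None of this is recoverable from the double-annihilator identities alone as you describe. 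Finally, your case split ``$(\mathcal{C}_{1})$ fails, derive $(\mathcal{C}_{2})$'' leaves unaddressed the configuration of a local ring with $M^{2}=0$ and $l(M)\geq 3$: one must actually prove (as the paper does with its identity $(b)\nsubseteq(a)\Rightarrow M=(a,b)$) that residual semi-regularity forces $l(M)=2$ exactly, or chainedness, in that case.
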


 Notice, at this point, that a coherent arithmetical ring is not residually semi-regular, in general. This is evidenced by Example~\ref{ex1}, which shows that the assumption ``$R_{M}$ is semi-regular for every $M\in\Max(R)$ such that $\Ker(R\rightarrow R_{M})\neq0$" is not redundant with $R$ being arithmetical and residually coherent; and hence a global version for Couchot's result is not always true (even in the class of coherent rings). Moreover, the residual coherence cannot be omitted from $(\mathcal{C}_{2})$ as shown by Example~\ref{rc}, which exhibits an example of an arithmetical and locally semi-regular ring that is not residually coherent (and, a fortiori, not residually semi-regular).

 We break down the proof of the theorem into several lemmas.

\begin{lemma}\label{mr1-lem1}
Let $R$ be a local residually semi-regular ring and let $I_{1}$ and $I_{2}$ be two finitely generated ideals of $R$ with $I_{1}\cap I_{2}\not=0$. Then:
\begin{enumerate}[(i)]
\item $I_{1}\cap I_{2}$ is finitely generated.
\item $I_{1}$ and $I_{2}$ are comparable.
\end{enumerate}
\end{lemma}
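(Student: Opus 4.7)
The plan is to reduce each part to a quotient that, by residual semi-regularity, is a local coherent self fp-injective ring, and then exploit these properties. Throughout, if $I_1\cap I_2 = R$ then $I_1 = I_2 = R$ and both assertions are trivial, so I assume $I_1\cap I_2$ is a proper (nonzero) ideal.

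For (i), I would first choose any nonzero $a \in I_1\cap I_2$. Since $I_1\cap I_2 \neq R$ we have $a\in M$, so $aR$ is a nonzero finitely generated proper ideal; residual semi-regularity then makes $\bar R := R/aR$ semi-regular and, in particular, coherent. Writing $\bar I_i$ for the image of $I_i$, coherence of $\bar R$ makes $\bar I_1\cap\bar I_2$ finitely generated as an ideal of $\bar R$. Because $aR\subseteq I_1\cap I_2$, one checks $\bar I_1\cap\bar I_2 = (I_1\cap I_2)/aR$, so lifting a finite generating set of $\bar I_1\cap\bar I_2$ and adjoining $a$ produces a finite generating set for $I_1\cap I_2$, proving (i).

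For (ii), the key move is to quotient by $I_1\cap I_2$ itself, which by (i) is now known to be a nonzero finitely generated proper ideal. Set $T := R/(I_1\cap I_2)$, which is local semi-regular by residual semi-regularity and the locality of $R$, and let $\tilde I_i$ denote the image of $I_i$ in $T$. A short check using $I_1\cap I_2 \subseteq I_i$ yields $\tilde I_1\cap\tilde I_2 = 0$, so $\tilde I_1 + \tilde I_2 = \tilde I_1 \oplus \tilde I_2$ as an internal direct sum. This sum is a finitely generated, and hence (by coherence of $T$) finitely presented, ideal of $T$.

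The finishing step invokes self fp-injectivity of $T$. The first-coordinate projection $\phi : \tilde I_1 \oplus \tilde I_2 \to T$ defined by $\phi(a+b) = a$ for $a\in\tilde I_1$, $b\in\tilde I_2$ is a well-defined $T$-linear map, so by fp-injectivity it extends to a $T$-endomorphism of $T$, i.e., to multiplication by some $t\in T$. Thus $ta = a$ for every $a\in\tilde I_1$ and $tb = 0$ for every $b\in\tilde I_2$. Since $T$ is local, either $t$ is a unit, forcing $\tilde I_2 = 0$ and hence $I_2\subseteq I_1$, or $t$ lies in the maximal ideal of $T$, in which case $1-t$ is a unit and $(1-t)\tilde I_1 = 0$ forces $\tilde I_1 = 0$, so $I_1\subseteq I_2$. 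The main obstacle I anticipate is not any single calculation but the choice of ideal to mod out by: using $I_1\cap I_2$ itself rather than a principal subideal of it is what manufactures the direct-sum situation in which the local-plus-fp-injective principle can supply a genuine comparability.
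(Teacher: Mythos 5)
Your proof is correct. Part (i) is the paper's argument essentially verbatim: pass to $R/aR$ for a nonzero $a\in I_{1}\cap I_{2}$, use coherence of the semi-regular quotient to see that $\overline{I_{1}}\cap\overline{I_{2}}$ is finitely generated, and lift. In part (ii) the skeleton is also the same --- both proofs ultimately produce an element $t$ (the paper calls it $y$) satisfying $(1-t)I_{1}\subseteq I_{2}$ and $tI_{2}\subseteq I_{1}$ and then invoke locality of $R$ --- but the source of that element differs. The paper works with colon ideals in $R$ and cites Matlis' Proposition 4.1 for the identity $(I:I_{1})+(I:I_{2})=(I:I_{1}\cap I_{2})=R$ with $I=I_{1}\cap I_{2}$, then writes $1=x+y$. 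You instead pass to $T=R/(I_{1}\cap I_{2})$, observe that the images form an internal direct sum $\tilde I_{1}\oplus\tilde I_{2}$, and extend the first-coordinate projection to multiplication by some $t\in T$ using self fp-injectivity of the semi-regular ring $T$ (Jain's theorem, quoted in the paper's introduction). Your route is more self-contained, re-deriving from first principles exactly the instance of Matlis' annihilator-sum identity that is needed; the only cosmetic remark is that what the fp-injectivity extension requires is that $T/(\tilde I_{1}\oplus\tilde I_{2})$ be finitely presented, which already follows from the ideal being finitely generated, so your appeal to coherence to get the ideal itself finitely presented is harmless but not necessary.
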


\begin{proof}
(i) Let $0\neq x\in I_{1}\cap I_{2}$. Without loss of generality, we may assume that  $Rx\subsetneqq I_{1}\cap I_{2}$ and consider the semi-regular ring $\overline{R}:=R/Rx$ which is coherent by \cite[Proposition 3.3]{M85}. Then, $\overline{I_{1}}\cap \overline{I_{2}}=\overline{I_{1}\cap I_{2}}$ is finitely generated in $\overline{R}$. Hence $I_{1}\cap I_{2}$ is finitely generated in $R$.

(ii) First, note that if $0\not=I, J$, and $K$ are three finitely generated ideals of $R$ with $I\subseteq J$ and $I\subseteq K$, then, by \cite[Proposition 4.1]{M85}, $R/I$ satisfies the double annihilator condition on $J/I$ and $$\Ann_{\frac{R}{I}}(\frac{J}{I})+\Ann_{\frac{R}{I}}(\frac{K}{I})=\Ann_{\frac{R}{I}}(\frac{J\cap K}{I})$$
that is,
\begin{equation}\label{eq1} (I:(I:J))=J\end{equation}
and
\begin{equation}\label{eq2} (I:J)+(I:K)=(I:J\cap K)\end{equation}
where by $(I:J)$ we mean $(I:_{R}J)=\big\{x\in R \mid xJ\subseteq I\big\}$. Now, $0\neq I_{1}\cap I_{2}$ is finitely generated by (i). Hence, by (\ref{eq2}), we obtain
$$ (I_{1}\cap I_{2}:I_{1})+(I_{1}\cap I_{2}:I_{2})=(I_{1}\cap I_{2}:I_{1}\cap I_{2})=R.$$
Therefore, $1=x+y$, for some $x\in (I_{1}\cap I_{2}:I_{1})$ and $y\in (I_{1}\cap I_{2}:I_{2})$. It follows that, for any $a_{1}\in I_{1}$ and $a_{2}\in I_{2}$, we have $(1-y)a_{1}=xa_{1}\in I_{2}\ \text{and}\ ya_{2}\in I_{1}$. Since $R$ is local, either $y$ or $1-y$ is a unit, forcing $I_{1}$ and $I_{2}$ to be comparable.
\hfill$\Box$\end{proof}

\begin{lemma}\label{mr1-lem2}
Let $(R,M)$ be a local residually semi-regular ring and let $x,y\in R$.
\begin{enumerate}[(i)]
\item $x^{2}\not=0$ and $y^{2}\not=0$ $\Rightarrow$ $xy\not=0$ $\Rightarrow$ $(x)$ and $(y)$ are comparable.
\item $x^{2}=0$ and $y^{2}\not=0$ $\Rightarrow$ $(x)\subseteq (y)$.
\end{enumerate}
\end{lemma}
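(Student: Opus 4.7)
The plan is to reduce both parts to Lemma~\ref{mr1-lem1}(ii) by exhibiting, in each case, a nonzero element in the intersection of two well-chosen principal ideals, and then to conclude with the local-ring dichotomy ``$r$ or $1-r$ is a unit''.

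For the second implication of (i), observe that $xy\in Rx\cap Ry$ is nonzero by hypothesis, so Lemma~\ref{mr1-lem1}(ii) applied to $I_{1}=Rx,\ I_{2}=Ry$ immediately yields comparability. The first implication of (i) is the substantive step: I argue by contradiction, assuming $x^{2}\neq 0$, $y^{2}\neq 0$, but $xy=0$ (one may reduce to $x,y\in M$, since otherwise the conclusion is automatic). The key idea is to introduce the auxiliary element $x+y$: since $xy=0$, we have $x(x+y)=x^{2}\neq 0$, so $x^{2}$ lies in $Rx\cap R(x+y)$, and Lemma~\ref{mr1-lem1}(ii) makes $Rx$ and $R(x+y)$ comparable. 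A short case analysis, using the local dichotomy on the coefficient $1-r$ that appears in a relation of the shape $(1-r)x=ry$, shows in every subcase that $y$ must lie in $Rx$, say $y=cx$; then $xy=cx^{2}=0$ propagates to $y^{2}=c(cx^{2})=0$, contradicting $y^{2}\neq 0$.

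Part (ii) then follows by the same kind of manoeuvre. If $xy\neq 0$, part (i) gives $Rx,Ry$ comparable, and the inclusion $Ry\subseteq Rx$ is ruled out (it would force $y^{2}=0$), so $Rx\subseteq Ry$. If instead $xy=0$, the cross-term vanishes and $(x+y)^{2}=y^{2}\neq 0$, so part (i) applies to the pair $(x+y,y)$ and makes $R(x+y)$ and $Ry$ comparable; a brief case analysis, again using that $y^{2}\neq 0$ forces a key coefficient to be a unit, lands $x$ inside $Ry$.

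The main obstacle, and the only place where one cannot just invoke a previous lemma, is the first implication of (i): the choice of the witness $x+y$ is the essential trick, and the verification that, in each subcase of the comparability, $y$ must end up in $Rx$ depends crucially on the local-ring hypothesis via the ``$r$ or $1-r$ is a unit'' dichotomy. Everything else cascades from this step, which explains why part (ii) and the ``$xy\neq0$'' direction of (i) are treated as corollaries.
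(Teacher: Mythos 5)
Your proof is correct, but it takes a genuinely different route from the paper's. The paper handles the second implication of (i) exactly as you do (via Lemma~\ref{mr1-lem1}), but for the core step --- deriving a contradiction from $x^{2}\neq0$, $y^{2}\neq0$, $xy=0$ --- it sets $I:=(x,y)$, proves $\Ann(y)\subseteq I$ and $y\notin I^{2}$, computes $\Ann(\overline{y})=\overline{I}$ in $\overline{R}:=R/I^{2}$, invokes the double annihilator condition of the semi-regular ring $\overline{R}$ to conclude $(\overline{y})=\overline{I}$, and finishes with Nakayama's lemma to force $I=(y)$; part (ii) is dispatched by the same machinery. Your argument replaces all of this with the auxiliary element $x+y$: since $x(x+y)=x^{2}\neq0$, Lemma~\ref{mr1-lem1}(ii) makes $(x)$ and $(x+y)$ comparable, and the local dichotomy then places one of $x,y$ in the principal ideal generated by the other, after which $xy=0$ forces $x^{2}=0$ or $y^{2}=0$. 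This is more elementary --- no passage to $R/I^{2}$, no annihilator computation, no Nakayama --- and yields a shorter, self-contained argument relying only on Lemma~\ref{mr1-lem1} and local-ring arithmetic; the paper's route is heavier but exercises the double-annihilator identities that recur throughout the rest of the proofs. One small imprecision in your sketch: in the subcase $(x)\subseteq(x+y)$ where $1-r$ is a unit and $r$ is not, the relation $(1-r)x=ry$ gives $x\in(y)$ rather than $y\in(x)$, so the contradiction there is with $x^{2}\neq0$, not $y^{2}\neq0$; the argument closes all the same by symmetry, but the claim that every subcase lands $y$ in $Rx$ should be corrected.
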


\begin{proof}
 (i) In view of Lemma~\ref{mr1-lem1}, we only need to prove the first implication. Assume $x^{2}\not=0$ and $y^{2}\not=0$. Clearly, $x\not=0$ and $y\not=0$. Suppose, by way of contradiction, that $xy=0$. Then, necessarily, $(x)$ and $(y)$ are incomparable. Next, let $I:=(x,y)$. Then, for any $z\in\Ann(y)$, $(x,z)\cap I\neq0$. By Lemma~\ref{mr1-lem1}, $z\in I$ since $y\notin (x,z)$. Therefore, $\Ann(y)\subseteq I$. Further, $y\notin I^{2}$; otherwise, $y=ax^{2}+by^{2}$ for some $a,b\in R$ yields $y=ax^{2}(1-by)^{-1}\in (x)$, absurd. So, $\overline{y}\not=0$ in $\overline{R}:=R/I^{2}$. We claim that $\Ann(\overline{y})=\overline{I}$ in $\overline{R}$. Indeed, let $\overline{t}\in\Ann(\overline{y})$. Then, there exist  $a,b\in R$ such that $y(t-by)=ax^{2}\in (x)\cap(y)$. By Lemma~\ref{mr1-lem1}, $y(t-by)=0$. Hence $t-by\in\Ann(y)\subseteq I$. Hence $\overline{t}\in \overline{I}$. The reverse inclusion is obvious, proving the claim. Now, the fact that $\overline{R}$ is semi-regular yields
\begin{equation}\label{eq3} \overline{I}\subseteq\Ann(\overline{I})=\Ann(\Ann(\overline{y}))=(\overline{y})\subseteq\overline{I}.\end{equation}
It follows that $(\overline{y})=\overline{I}$ and therefore
\begin{equation}\label{eq4} I=(y)+I^{2}=(y)+MI.\end{equation}
By Nakayama's lemma, we get $I=(y)$, the desired contradiction.

(ii) Assume $x^{2}=0$ and $y^{2}\not=0$. Clearly, $y\not=0$. Without loss of generality, we may assume  $x\not=0$ and $y$ is not a unit. If $xy\not=0$, then $(x)$ and $(y)$ are comparable and necessarily $(x)\subseteq (y)$. Next, suppose that $xy=0$ and let $I:=(x,y)$. Similarly to (i), we have $\Ann(y)\subseteq I$, and  $\overline{y}\not=0$ in $\overline{R}:=R/I^{2}$; otherwise, $y=ay^{2}$ for some $a\in R$ yields $y(1-ay)=0$, absurd (since $1-ay$ is a unit). Also, $ty=ay^{2}$ for some $a\in R$ yields $t-ay\in\Ann(y)\subseteq I$ and so $t\in I$. That is, $\Ann(\overline{y})=\overline{I}$ in $\overline{R}$. Similar arguments as in (\ref{eq3}) and (\ref{eq4}) lead to  $I=(y)$, as desired.
\hfill$\Box$\end{proof}

\begin{lemma}\label{mr1-lem3}
Let $R$ be a local residually semi-regular ring and $I$ a finitely generated ideal of $R$. Then, either $I$ is principal or $I$ is generated by two elements with $I^{2}=0$.
\end{lemma}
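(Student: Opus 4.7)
The plan is to work with a minimal generating set $x_1, \ldots, x_n$ of $I$ and split the argument into two stages: first show that if $I$ is not principal then $I^2 = 0$, and then show that in that case $n \leq 2$.

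For the first stage, I would begin by proving that if some generator satisfies $x_i^2 \neq 0$, then $I$ is automatically principal. Lemma~\ref{mr1-lem2}(i) makes the principal ideals $(x_i)$ for indices with $x_i^2 \neq 0$ pairwise comparable, so one of them---call it $(x_{i_0})$---dominates the rest. Lemma~\ref{mr1-lem2}(ii) then places every remaining generator $x_k$ (those with $x_k^2 = 0$) inside $(x_{i_0})$, giving $I = (x_{i_0})$. Hence, if $I$ is not principal, every $x_i$ must satisfy $x_i^2 = 0$. For such a generating set, pick any $i \neq j$: if $x_i x_j \neq 0$, then $(x_i) \cap (x_j) \supseteq (x_i x_j) \neq 0$, and Lemma~\ref{mr1-lem1}(ii) would force $(x_i)$ and $(x_j)$ to be comparable, letting us drop one generator and contradicting minimality of $n$. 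Thus $x_i x_j = 0$ for all $i, j$, and $I^2 = 0$.

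For the second stage, I would argue by contradiction: if $n \geq 3$, consider the finitely generated ideals $I_1 := (x_1, x_2)$ and $I_2 := (x_1, x_3)$. Their intersection contains $x_1 \neq 0$ (since $x_1$ belongs to a minimal generating set of the nonzero ideal $I$), so Lemma~\ref{mr1-lem1}(ii) forces $I_1$ and $I_2$ to be comparable. In the case $I_1 \subseteq I_2$ one obtains a relation $x_2 = a x_1 + b x_3$ with $a, b \in R$; reducing modulo $MI$ contradicts the $R/M$-linear independence of $\bar{x}_1, \bar{x}_2, \bar{x}_3$ that Nakayama's lemma attaches to the minimal generating set. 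The case $I_2 \subseteq I_1$ is handled symmetrically. Consequently $n \leq 2$, completing the argument.

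The main obstacle, I expect, is the bookkeeping in the first stage: one must carefully dispose of mixed generating sets (some $x_i^2 \neq 0$, some $x_j^2 = 0$) using \emph{both} parts of Lemma~\ref{mr1-lem2}, and then leverage the minimality of $n$ together with the annihilation trick $(x_i) \cap (x_j) \supseteq (x_i x_j)$ to upgrade from ``every generator squares to zero'' to the stronger $I^2 = 0$. The second stage is essentially a linear-algebra argument once one selects the right pair of enlarged ideals and invokes Nakayama.
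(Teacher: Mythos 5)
Your proposal is correct and follows essentially the same route as the paper: both arguments use Lemma~\ref{mr1-lem1}(ii) on the pair $(x_1,x_2)$, $(x_1,x_3)$ to cut a generating set down to two elements, and Lemma~\ref{mr1-lem2} to show that a non-principal $I$ forces all squares and cross-products of generators to vanish. The only difference is cosmetic --- you run the two stages in the opposite order and invoke Nakayama where the paper simply drops a redundant generator.
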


\begin{proof}
Notice first that, for any $0\not=x,y,z\in R$, $(x,y)$ and $(x,z)$ are comparable by Lemma~\ref{mr1-lem1}. It follows that any finitely generated ideal is generated by at most two elements. So, $I=(x,y)$ for some $x,y\in R$. If $xy\not=0$ or $x^{2}\not=0$ or $y^{2}\not=0$, then $I$ is principal by Lemma~\ref{mr1-lem1} and Lemma~\ref{mr1-lem2}, completing the proof of the lemma.
\hfill$\Box$\end{proof}

Recall that a ring $R$ is Gaussian if $c(fg)=c(f)c(g)$ for any polynomials $f,g$ in $R[X]$, where $c(f)$ denotes the content of $f$ (i.e., the ideal of $R$ generated by the coefficients of $f$). The class of Gaussian rings lies strictly between the two classes of arithmetical rings and Pr\"ufer rings \cite{BG,BG2,T}.

\begin{lemma}\label{mr1-lem4}
Let $(R,M)$ be a local residually semi-regular ring. Then, $R$ is Gaussian. Moreover, if $\Ze(R)$ is not uniserial and $(\Ze(R))^{2}=0$, then $\Ze(R)=M$.
\end{lemma}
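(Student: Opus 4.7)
The plan is to prove the two assertions independently, both leveraging Lemmas~\ref{mr1-lem1}--\ref{mr1-lem3}. For the Gaussian property, the input is Lemma~\ref{mr1-lem3}, which classifies every finitely generated ideal of $R$ as either principal or $2$-generated with square zero. The principal case admits a sharpening in the local setting: if $(a,b)=(c)$ with $c=ra+sb$, $a=\alpha c$, $b=\beta c$, then $(1-r\alpha-s\beta)c=0$; if $c\neq 0$ the factor $1-r\alpha-s\beta$ annihilates a nonzero element, hence lies in $M$, so $r\alpha+s\beta$ is a unit and either $r\alpha$ or $s\beta$ is, forcing $(a,b)=(a)$ or $(a,b)=(b)$. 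In all cases one then verifies $(a,b)^{2}=(a^{2})$ or $(b^{2})$, and if moreover $(a,b)^{2}=(a^{2})$ with $ab=0$ then substituting $b=ta$ gives $b^{2}=t(ta^{2})=0$. The Bazzoni--Glaz local characterization of Gaussian rings \cite{BG,BG2} then yields that $R$ is Gaussian.

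For the second assertion, I would argue by contradiction. Assume $(\Ze(R))^{2}=0$, $\Ze(R)$ not uniserial, and suppose there exists a non-zero-divisor $m\in M$. Since $m^{2}\neq 0$ and every $x\in\Ze(R)$ satisfies $x^{2}=0$, Lemma~\ref{mr1-lem2}(ii) gives $\Ze(R)\subseteq (m)$. Choose $a,b\in\Ze(R)$ with $(a)$, $(b)$ incomparable and write $a=m\alpha$, $b=m\beta$; since $m$ is a non-zero-divisor, $\alpha,\beta\in\Ze(R)$ and $(\alpha),(\beta)$ are also incomparable (if $\alpha=r\beta$ then $a=rb\in(b)$). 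Set $I:=(a,b)$, $J:=(\alpha,b)$, $K:=(a,\beta)$. The crux is $J\cap K=I$: in the semi-regular (hence residually semi-regular) ring $R/I$, a $1-rm$ unit argument shows that $(\bar\alpha)$ and $(\bar\beta)$ remain incomparable, so Lemma~\ref{mr1-lem1}(ii) applied to $R/I$ yields $(\bar\alpha)\cap(\bar\beta)=0$, i.e., $J\cap K\subseteq I$. Applying equation~(\ref{eq2}) from the proof of Lemma~\ref{mr1-lem1}(ii),
\[
(I:J)+(I:K)=(I:J\cap K)=(I:I)=R.
\]
Using $b\in I$ and $(\alpha)\cap(\beta)=0$ in $R$ (from Lemma~\ref{mr1-lem1}(ii) applied in $R$), a direct calculation gives $(I:J)=(m)+\Ann(\alpha)$ and $(I:K)=(m)+\Ann(\beta)$. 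So $R=(m)+\Ann(\alpha)+\Ann(\beta)$; but $m\in M$ and $\alpha,\beta\neq 0$ force $\Ann(\alpha),\Ann(\beta)\subsetneq R$, hence all three summands lie in $M$, yielding the desired contradiction $R\subseteq M$.

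I expect the main obstacle to be the second part, specifically spotting the auxiliary pair $J=(\alpha,b)$, $K=(a,\beta)$ whose colon ideals isolate $\Ann(\alpha)$ and $\Ann(\beta)$ modulo $(m)$, and then verifying $J\cap K=I$ so that the colon identity from Lemma~\ref{mr1-lem1} is triggered. Everything else reduces to bookkeeping with the $1-rm$ unit trick and the residually semi-regular hypothesis.
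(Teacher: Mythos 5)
Your proof is correct, but the second assertion is handled by a genuinely different argument than the paper's. For the first assertion both you and the authors verify the Bazzoni--Glaz local criterion from \cite{BG2}; the paper runs a direct three-case analysis on $a^{2},b^{2}$ using Lemma~\ref{mr1-lem2}, while you route through Lemma~\ref{mr1-lem3} and a unit trick for the principal case --- essentially the same computation (just make sure the implication ``$(a,b)^{2}=(a^{2})$ and $ab=0\Rightarrow b^{2}=0$'' is also checked when $(a,b)=(b)$ or $I^{2}=0$; both are immediate). For the second assertion the paper argues directly: from an incomparable pair $a,b\in\Ze(R)$ it gets $\Ze(R)=\Ann(b)=((a):b)$, which is finitely generated by coherence of the semi-regular ring $R/(a)$; it then invokes the fact that $\Ze(R)$ is prime in a local Gaussian ring (so the first statement is actually \emph{used} here), whence $R/\Ze(R)$ is a semi-regular domain, hence a field, giving $\Ze(R)=M$. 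You instead suppose a non-zero-divisor $m\in M$ exists, pull the incomparable pair back through $m$ via Lemma~\ref{mr1-lem2}(ii), and apply the colon-sum identity (\ref{eq2}) to the auxiliary ideals $J=(\alpha,b)$, $K=(a,\beta)$ to obtain $R=(m)+\Ann(\alpha)+\Ann(\beta)\subseteq M$, a contradiction; I checked the key steps ($J\cap K=I$ via the $1-sm$ unit argument and Lemma~\ref{mr1-lem1}(ii) in $R/I$, and $(I:J)=(m)+\Ann(\alpha)$ via $(\alpha)\cap(\beta)=0$) and they hold. Your route is longer but self-contained in the preceding lemmas and does not rely on the Gaussian property or on the primeness of $\Ze(R)$; the paper's route is shorter but imports two external facts (primeness of $\Ze$ in local Gaussian rings and ``a semi-regular domain is a field'').
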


\begin{proof}
By \cite[Theorem 2.2]{BG2}, $R$ is Gaussian if and only if, $\forall\ a,b\in R$, $(a,b)^{2}= (a^{2})$ or $(b^{2})$, and if $(a,b)^{2}= (a^{2})$ and $ab=0$, then $b^{2}=0$. Next, let $a,b\in R$. The case $a^{2}\not=0$ and $b^{2}\not=0$ is handled by Lemma~\ref{mr1-lem2}(i) and the case $a^{2}\not=0$ and $b^{2}=0$ is handled by Lemma~\ref{mr1-lem2}(ii). If $a^{2}=b^{2}=0$, then $ab=0$ by Lemma~\ref{mr1-lem1}, whence  $(a,b)^{2}=0$, completing the proof of the first statement.

Next, suppose that $\Ze(R)$ is not uniserial and $(\Ze(R))^{2}=0$. The latter assumption yields $\Ze(R)=\Ann(a)$, for every $a\in\Ze(R)$. Further, by Lemma~\ref{mr1-lem1}, there exist two nonzero elements $a,b\in\Ze(R)$ with $(a)\cap(b)=0$. So, we obtain
$$\Ze(R)=\Ann(b)=((a)\cap(b):b)=((a):b).$$
Since $R/(a)$ is semi-regular and, hence, coherent, we deduce that $\Ze(R)$ is finitely generated. Further, $\Ze(R)$ is a prime ideal since $R$ is local Gaussian. It follows that $R/\Ze(R)$ is a semi-regular integral domain and, hence, a field. That is, $\Ze(R)=M$, completing the proof of the lemma.
\hfill$\Box$\end{proof}

Let $R$ be a ring and $M$ an $R$-module. An $R$-module $V$ is \emph{$M$-projective} if the natural map $\Hom_{R}(V,M)\rightarrow\Hom_{R}(V,M/N)$ is surjective for every submodule $N$ of $M$; and $V$ is \emph{quasi-projective} if $V$ is $V$-projective. A ring $R$ is an \emph{fqp-ring} if every finitely generated ideal of $R$ is quasi-projective \cite{AJK,Cou2015}. We always have:
\begin{center}Arithmetical $\Rightarrow$ fqp $\Rightarrow$ Gaussian\end{center} and the fqp notion is a local property in the class of coherent rings \cite[Proposition 4.4]{Cou2015} or \cite[Corollary 3.15]{AJK}.

\begin{lemma}\label{mr1-lem7}
Let $(R,M)$ be a local residually semi-regular ring. Then, $\Ann(x)=\Ann(y)$, for any nonzero $x,y\in R$  such that $(x)$ and $(y)$ are incomparable.
\end{lemma}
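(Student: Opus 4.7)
The plan is to prove $\Ann(y) \subseteq \Ann(x)$, from which symmetry gives the stated equality. Lemmas~\ref{mr1-lem1} and~\ref{mr1-lem2} already show that incomparability of $(x)$ and $(y)$ forces $x^2 = y^2 = xy = 0$, $(x) \cap (y) = 0$, and $x, y \in M$, so the remaining task is: given $t \in \Ann(y)$, show $tx = 0$.

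The decisive move is to pass to the semi-regular quotient $R/(x)$. Coherence of $R/(x)$ makes $\Ann_{R/(x)}(\overline{y}) = \Ann(y)/(x)$ finitely generated in $R/(x)$, and Lemma~\ref{mr1-lem3} applied in $R/(x)$ produces the dichotomy: either this ideal is principal in $R/(x)$, or it is generated by two elements whose square is zero. The same lemma applied to the fg ideal $\Ann(y) \subseteq R$ yields the analogous dichotomy in $R$, and the two uses interact, since principality in $R$ descends to principality in $R/(x)$. Hence, in the non-principal sub-case in $R/(x)$, $\Ann(y)$ must also be non-principal in $R$, forcing $\Ann(y)^{2} = 0$. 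Because $xy = 0$ puts $x \in \Ann(y)$, this gives $tx \in \Ann(y)^{2} = 0$ at once.

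In the principal sub-case, write $\Ann(y) = (x, v)$; the double annihilator condition in the semi-regular ring $R/(x)$ identifies $\Ann_{R/(x)}(\overline{v}) = (\overline{y})$, equivalently $\{r \in R : rv \in (x)\} = (x, y)$. If $v \in (x, y)$ then $vx = 0$ directly (from $x^2 = yx = 0$), and hence every element of $(x, v) = \Ann(y)$ annihilates $x$. Otherwise $v \notin (x, y)$ forces $v^{2} \notin (x)$, hence $v^{2} \neq 0$, and Lemma~\ref{mr1-lem2}(ii) forces $(x) \subseteq (v)$. Writing $x = rv$, the criterion applied to $r$ (using $rv = x \in (x)$) gives $r \in (x, y)$; substituting $r = \alpha x + \beta y$ and using $yv = 0$ yields $x = \alpha r v^{2}$, which combined with $x = rv$ produces $rv(1 - \alpha v) = 0$. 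Since $v \in M$ (otherwise $\Ann(y) = R$ and $y = 0$), the factor $1 - \alpha v$ is a unit, forcing $x = 0$, a contradiction. The main obstacle is recognising that Lemma~\ref{mr1-lem3} must be invoked simultaneously in $R$ and in $R/(x)$, and that the double annihilator condition together with Lemma~\ref{mr1-lem2}(ii) disposes of the remaining principal sub-case via a Nakayama-style argument.
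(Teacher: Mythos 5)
Your proof is correct, and it takes a genuinely different route from the paper's. The paper first observes that $\Ann(x)$ and $\Ann(y)$ are finitely generated and hence, by Lemma~\ref{mr1-lem1}, comparable (say $\Ann(x)\subseteq\Ann(y)$), and then, for $t\in\Ann(y)$ with $tx\neq 0$, derives a contradiction inside $R/(tx)$: the colon ideals $\big((tx):(x)\big)$ and $\big((tx):(y)\big)$ are comparable, and the identity $(I:(I:J))=J$ converts either inclusion into the absurdity $(y)\subseteq(x)$ or $y=0$. You bypass the comparability of the two annihilators altogether and instead analyse the structure of the single ideal $\Ann(y)$: Lemma~\ref{mr1-lem3}, applied both in $R$ and in $R/(x)$ (which is legitimately local and residually semi-regular, since a quotient of $R/(x)$ by a nonzero finitely generated proper ideal is again a quotient of $R$ by one), yields either $\Ann(y)^{2}=0$, which kills $tx$ immediately because $t,x\in\Ann(y)$, or $\Ann(y)=(x,v)$, where the double annihilator condition in the semi-regular ring $R/(x)$ identifies $\{r:rv\in(x)\}=(x,y)$ and your unit argument via Lemma~\ref{mr1-lem2}(ii) excludes $v\notin(x,y)$. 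What each approach buys: the paper's argument is uniform in $t$ and needs only Lemma~\ref{mr1-lem1} plus the double annihilator identity, whereas yours trades that for the two-generator structure theorem and works in the single quotient $R/(x)$ independent of $t$; it also absorbs the case $M^{2}=0$ without treating it separately, as the paper does. All the steps check out --- in particular $\Ann_{R/(x)}(\overline{y})=\Ann(y)/(x)$ because $ry\in(x)\cap(y)=0$, and $\Ann(y)\neq(x)$ because $y\in\Ann(y)\setminus(x)$, so the principal sub-case is non-degenerate.
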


\begin{proof}
If $M^{2}=0$, then $M=\Ann(x)$ for every $x\in M$ and the result trivially holds. Next, assume $M^{2}\not=0$ and let $x,y$ be two nonzero elements of $R$  such that $(x)$ and $(y)$ are incomparable. By Lemmas \ref{mr1-lem1} and \ref{mr1-lem2}, we get
$$(x)\cap(y)=0\ \text{and}\ x^{2}=y^{2}=xy=0.$$
Hence $x,y\in\Ann(x)\cap\Ann(y)$. Further, as seen in the proof of Lemma~\ref{mr1-lem4}, $\Ann(x)=((x)\cap(y):x)=((y):x)$ is finitely generated; and likewise so is $\Ann(y)$. Hence, by Lemma~\ref{mr1-lem1}, $\Ann(x)$ and $\Ann(y)$ are comparable; say, $\Ann(x)\subseteq\Ann(y)$. Next, we prove the reverse inclusion. Let $t\in\Ann(y)$ and assume, by way of contradiction, that $tx\not=0$. First, notice that, via (\ref{eq1}), we have
$$(tx,y)\subseteq\Big((tx):\big((tx):(y)\big)\Big)\subseteq\Big((tx):\big((tx):(tx,y)\big)\Big)=(tx,y).$$
Moreover, $\big((tx):(x)\big)$ and $\big((tx):(y)\big)$ are finitely generated by coherence of $R/(tx)$; and $0\neq x\in\big((tx):(x)\big)\cap\big((tx):(y)\big)$. So, by Lemma~\ref{mr1-lem1}, $\big((tx):(x)\big)$ and $\big((tx):(y)\big)$ are comparable. If $\big((tx):(x)\big)\subseteq\big((tx):(y)\big)$, then we obtain via (\ref{eq1})
$$(tx,y)=\Big((tx):\big((tx):(y)\big)\Big)\subseteq\Big((tx):\big((tx):(x)\big)\Big)=(x)$$
yielding $(y)\subseteq(x)$, absurd. So, suppose $\big((tx):(y)\big)\subseteq\big((tx):(x)\big)$. Then same argument as above yields $(x)\subseteq (tx,y)$. That is, $x-atx\in(x)\cap(y)=0$, for some $a\in R$. Hence, $x(1-at)=0$, whence $1-at\in\Ann(x)\subseteq\Ann(y)$. It follows that $y=yat=0$, absurd.
\hfill$\Box$\end{proof}

\begin{lemma}\label{mr1-lem6}
A local residually semi-regular ring is an fqp-ring.
\end{lemma}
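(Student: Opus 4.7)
The plan is to verify quasi-projectivity of every finitely generated ideal of $R$ directly, leveraging the structural results from Lemmas~\ref{mr1-lem1}--\ref{mr1-lem7}. By Lemma~\ref{mr1-lem3}, each finitely generated ideal $I$ is either principal or of the form $I=(a,b)$ with $I^{2}=0$. In the principal case there is nothing to do, since cyclic modules over a commutative ring are automatically quasi-projective (any homomorphism $R/J\to R/L$ with $J\subseteq L$ is induced by multiplication by an element of $R$, which obviously lifts to an endomorphism of $R/J$).

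In the non-principal case one may assume $(a)$ and $(b)$ are incomparable, since otherwise $I$ collapses to a principal ideal. Lemma~\ref{mr1-lem1}(ii) then forces $(a)\cap(b)=0$, and Lemma~\ref{mr1-lem7} yields $\Ann(a)=\Ann(b)=:N$; since $I^{2}=0$, one also has $\Ann(I)=\Ann(a)\cap\Ann(b)=N$. Consider the $R$-linear map
\[
\varphi\colon R/N\oplus R/N\longrightarrow I,\qquad (r+N,\,s+N)\longmapsto ra+sb,
\]
which is well defined by the choice of $N$, and surjective by construction. If $ra+sb=0$, then $ra=-sb\in(a)\cap(b)=0$, so $ra=sb=0$ and hence $r,s\in N$. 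Thus $\varphi$ is an isomorphism, exhibiting $I$ as a free module of rank two over $R/\Ann(I)$.

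The proof then finishes with the standard observation that any $R$-module $M$ which is projective over $R/\Ann(M)$ is automatically quasi-projective over $R$: for every submodule $K\subseteq M$ one has $\Ann(M)\subseteq\Ann(M/K)$, so $\Hom_{R}(M,M/K)=\Hom_{R/\Ann(M)}(M,M/K)$ and the required lifting reduces to $R/\Ann(M)$-projectivity of $M$. Applied to the free-of-rank-two $R/N$-module $I$, this gives quasi-projectivity of $I$ over $R$ and concludes the proof.

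The main technical point is identifying the structure $I\cong(R/N)^{2}$ precisely; without the equality $\Ann(a)=\Ann(b)$ supplied by Lemma~\ref{mr1-lem7}, the obvious presentation $R\oplus R\twoheadrightarrow I$ would not descend to a free module over the common annihilator. The passage from $R/N$-projectivity to quasi-projectivity over $R$ is routine, and the principal case is trivial, so it is really Lemma~\ref{mr1-lem7} together with the vanishing of $(a)\cap(b)$ from Lemma~\ref{mr1-lem1} that drive the argument.
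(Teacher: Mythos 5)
Your proof is correct and follows essentially the same route as the paper: reduce via Lemma~\ref{mr1-lem3} to the principal case and the case $I=(a,b)$ with $I^{2}=0$, and in the latter use Lemma~\ref{mr1-lem1} (to get $(a)\cap(b)=0$) together with Lemma~\ref{mr1-lem7} (to get $\Ann(a)=\Ann(b)$) to identify $I\cong\bigl(R/\Ann(a)\bigr)^{2}$. The only difference is cosmetic: where the paper invokes \cite[Theorem 2.3]{AJK} to pass from $I\cong(R/J)^{n}$ to quasi-projectivity, you supply the (correct) elementary argument that a module projective over $R/\Ann(M)$ is quasi-projective over $R$.
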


\begin{proof}
Let $I$ be a finitely generated ideal of $R$. We shall prove that $I$ is quasi projective. By \cite[Theorem 2.3]{AJK}, we only need to prove that $I\cong \left(\frac{R}{J}\right)^{n}$, for some ideal $J$ of $R$ and integer $n\geq 0$. By Lemma~\ref{mr1-lem3}, either $I$ is principal or $I$ is generated by two elements with $I^{2}=0$. If $I=Rx$, then $I\cong \frac{R}{\Ann(x)}$, as desired. Next, suppose that $I=(x,y)$ is not principal. We claim that
$$I\cong \left(\frac{R}{\Ann(x)}\right)^{2}.$$
To this purpose, consider the surjective $R$-map $\varphi: R^{2}\rightarrow I$ defined by $\varphi(a,b)=ax+by$. Now, $\varphi(a,b)=0$ yields $ax=-by\in(x)\cap(y)=0$ by Lemma~\ref{mr1-lem1} since  $(x)$ and $(y)$ are incomparable. Therefore, $a\in\Ann(x)$ and $b\in\Ann(y)=\Ann(x)$ by Lemma~\ref{mr1-lem7}. It follows that
$$\Ker(\varphi)=\Ann(x)\times\Ann(x)$$
and thus
$$I\cong \frac{R^{2}}{\Ann(x)\times\Ann(x)}\cong \left(\frac{R}{\Ann(x)}\right)^{2}$$ completing the proof of the lemma.
\hfill$\Box$\end{proof}

\begin{lemma}\label{levy1}
Let $(R,M)$ be a local ring with $M^{2}=0$. Then, $R$ is semi-regular if and only if $l(M)\leq 1$.
\end{lemma}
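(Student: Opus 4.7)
The statement splits into the two implications of a biconditional. For the forward implication, the plan is to apply Matlis' characterization of semi-regularity under coherence (the same double annihilator condition the authors have been exploiting via equation (\ref{eq1}), namely $\Ann(\Ann(I))=I$ for every finitely generated ideal $I$). Since a semi-regular ring is coherent by \cite[Proposition 3.3]{M85}, this is available to us. Pick any nonzero $x\in M$. Because $M^{2}=0$, every element of $M$ kills $x$, so $M\subseteq\Ann(x)$, and because $R$ is local with $x\neq 0$, $\Ann(x)\neq R$, forcing $\Ann(x)=M$. The double annihilator condition applied to the finitely generated ideal $(x)$ then gives $\Ann(M)=\Ann(\Ann((x)))=(x)$. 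Note next that $\Ann(M)=M$: the inclusion $M\subseteq\Ann(M)$ is the hypothesis $M^{2}=0$, and the reverse inclusion holds because any $r\notin M$ is a unit in the local ring $R$ and satisfies $rM=M\neq 0$ whenever $M\neq 0$. Therefore $M=(x)$ for every nonzero $x\in M$, so either $M=0$ or $M$ is a simple $R$-module (isomorphic to $R/M$ since $\Ann(x)=M$); in either case $l(M)\leq 1$.

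For the reverse implication, if $l(M)=0$ then $R$ is a field, which is trivially semi-regular. If $l(M)=1$, then $M=(x)$ with $\Ann(x)=M$, and $R$ is Artinian local whose only ideals are $0$, $M$, and $R$. The plan is to verify that $R$ is self-injective via Baer's criterion and then invoke the fact, recalled in the introduction (see \cite[Proposition 3.4]{M85}), that in the Noetherian setting self-injectivity coincides with semi-regularity. Baer's criterion reduces to extending an arbitrary $R$-homomorphism $\varphi:M\to R$; setting $\varphi(x)=r$, the relation $M\varphi(x)=\varphi(Mx)=0$ forces $r\in\Ann(M)=M=(x)$, so $r=ax$ for some $a\in R$ and $\varphi$ is the restriction to $M$ of multiplication by $a$. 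Thus $R$ is self-injective, hence semi-regular.

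The main obstacle is really just a careful bookkeeping one: one must be sure to use the double annihilator condition in its correct form $\Ann(\Ann(I))=I$ (the form actually used throughout the paper in equation (\ref{eq1})), rather than the misprint appearing in the commentary preceding Theorem~\ref{mr1}. No appeal to any of the previous lemmas (\ref{mr1-lem1}--\ref{mr1-lem6}) is needed, which is appropriate since this lemma isolates the $(\mathcal{C}_{1})$ side of the dichotomy and will be used to identify the local structure in the principal theorem.
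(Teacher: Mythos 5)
Your proof is correct, and the forward direction is essentially identical to the paper's: both reduce to the computation $xR=\Ann(\Ann(xR))=\Ann(M)=M$ via the double annihilator characterization of semi-regularity under coherence from \cite[Proposition 4.1]{M85}. Where you diverge is the converse. The paper observes that the only finitely generated ideals of $R$ are $0$, $M=xR$, and $R$, verifies the double annihilator condition directly on these (the key step being $\Ann(\Ann(xR))=\Ann(M)=M=xR$), and concludes by \cite[Proposition 4.1]{M85} since $R$ is principal, hence coherent. You instead establish self-injectivity via Baer's criterion and then invoke the Noetherian equivalence of self-injectivity with semi-regularity (\cite[Proposition 3.4]{M85}). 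The underlying computation is the same in both routes --- everything hinges on $\Ann(M)=M=(x)$ --- so the difference is only in which of Matlis' two characterizations you feed it into; the paper's choice is marginally more economical since it reuses the same criterion in both directions, while yours makes the connection to Levy's self-injectivity setting more transparent. Your side remark about the double annihilator condition is also well taken: the displayed form $\Ann(\Ann(I))=\Ann(I)$ in the paragraph preceding Theorem~\ref{mr1} is indeed a misprint for $\Ann(\Ann(I))=I$, which is the form actually used in equation (\ref{eq1}) and in this lemma.
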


\begin{proof}
Assume $R$ is semi-regular. We may assume that $R$ is not a field and let $0\neq x\in M$. Then, we have
$$
\begin{array}{ccl}
xR 	            &=	&\Ann_{R}(\Ann_{R}(xR))\\
                &=	&\Ann_{R}(M)\\
                &=  &M.
\end{array}
$$
Consequently, $l(M)=1$. Conversely, assume that $l(M)= 1$ and let $0\neq x\in M$. Then, we have
$$
\begin{array}{ccl}
xR 	    &=  &M\\
        &=	&\Ann_{R}(M)\\
        &=	&\Ann_{R}(\Ann_{R}(xR)).
\end{array}
$$
It follows that $R$ satisfies the double annihilator condition on finitely generated ideals. Since $R$ is coherent (in fact, principal), then $R$ is semi-regular by \cite[Proposition 4.1]{M85}.
\hfill$\Box$\end{proof}

Finally, we proceed to the proof of the theorem.

\begin{pot}
We first prove sufficiency. Let $I$ be a nonzero finitely generated proper ideal of $R$.
Assume that $(\mathcal{C}_{1})$ holds. Therefore, $l(M/I)\leq 1$ and hence, by Lemma~\ref{levy1}, $R/I$ is a semi-regular ring, as desired. Next, assume that $(\mathcal{C}_{2})$ holds. Then, $R/I$ is coherent. Next, let $M\in\Max(R)$ with $I\subseteq M$ and $IR_{M}=rR_{M}$, for some $0\neq r\in R$. If $rR_{M}\not=0$, then
$$(R/I)_{M/I}\cong R_{M}/rR_{M}$$
is semi-regular by \cite[Theorem 11(1)]{Cou2003}. If $rR_{M}=0$, then
$$(R/I)_{M/I}\cong R_{M}$$
is semi-regular by hypothesis. Therefore, by \cite[Proposition 2.3]{M85}, $R$ is a residually semi-regular ring.

Conversely, assume $R$ is  residually semi-regular and let us envisage two cases.

{\bf Case 1:} Assume there is $M\in\Max(R)$ such that $M^{2}=0$. Necessarily, $(R,M)$ is local with $M$ being the only prime ideal of $R$. We will show that either $R$ is a chained ring or $l(M)=2$. Without loss of generality, we may assume that $R$ is not a field (i.e., $M\not=0$). If $(a)\cap (b) \not= 0$ for every nonzero $a,b\in M$, then, by Lemma~\ref{mr1-lem1}, $R$ is a chained ring. Further, let $I$ be a nonzero proper ideal of $R$, $0\not=a\in I$, and $x\in M$. Then either $x\in (a)$ or $a\in (x)$. The second case yields $a=ux$ for some unit $u\in R$, hence $I=M=(a)$; i.e., $l(M)=1$. By Lemma~\ref{levy1}, $R$ is semi-regular so that $(\mathcal{C}_{2})$ is satisfied. Next, assume that there exist nonzero $a_{o},b_{o}\in M$ such that
$$(a_{o})\cap (b_{o})=0.$$
Then, $(a_{o})$ and $(b_{o})$ are incomparable and, moreover, the assumption $ M^{2}=0$ yields the following property for any $0\not=a,b\in M$:
\begin{equation}\label{eq5} (b)\nsubseteq(a)  \Rightarrow  M=(a,b).\end{equation}
Indeed, we obviously have
$$M\subseteq (a:b)\ \text{and}\ M\subseteq (a:M).$$
Hence $(a:b)=M$ since $(a:b)\not=R$ and whence $(a:(a:b))=(a:M)=M$ since $(a: M)\not=R$. So, we obtain
$$M = (a:(a:b))\subseteq  (a:(a:(a,b))) = (a,b)\subseteq   M$$
where the second equality is ensured by (\ref{eq1}), yielding $ M=(a,b)$, as claimed. It follows that $ M=(a_{o},b_{o})$ and thus $R$ is Artinian. Hence
$$2\leq l( M)<\infty.$$
Next, let $I$ be an ideal of $R$ with $0\subsetneqq I\subsetneqq  M$ and let $0\not=a\in I$. Therefore, for any $b\in R$, if $b\notin (a)$, then $ M=(a,b)$ by (\ref{eq5}). It follows that $I=(a)$ and no ideal can be inserted between $I$ and $ M$. Consequently, $l( M)=2$ so that $(\mathcal{C}_{1})$ is satisfied.

{\bf Case 2:} Assume that $M^{2}\not=0$, for every $M\in\Max(R)$ (and observe that $M^{2}R_{M}$ might be null). Let $M\in\Max(R)$ and, without loss of generality, assume that $R_{ M}$ is not a field. Note first that if $r/1=0$ for some nonzero  $r\in R$, then
$$R_{M}\cong R_{M}/rR_{M}\cong(R/rR)_{M/rR}$$
is semi-regular, as desired. It remains to show that $R_{M}$ is a chained ring. To this purpose, let us envisage two subcases. {\sc Subcase 2.1:} Suppose that $M^{2}R_{M}=0$. Necessarily, $R_{M}\cong (R/M^{2})_{M/M^{2}}$ is semi-regular. Hence, by Lemma~\ref{levy1}, $l(MR_{M})=1$; whence $xR_{M}=MR_{M}$, for any $0\neq x\in MR_{M}$. In particular, $R_{M}$ is a chained ring. {\sc Subcase 2.2:} Suppose that $M^{2}R_{M}\neq0$. By Lemma~\ref{mr1-lem6}, $R_{M}$ is an fqp-ring. Assume, by way of contradiction, that $R_{M}$ is not a chained ring. Then, by \cite[Lemmas 3.12 \& 4.5]{AJK}, we have
$$(\nil(R_{M}))^{2}=0\ \text{and}\ \Ze(R_{M})=\nil(R_{M}).$$
That is, $(\Ze(R_{M}))^{2}=0$. But, by Lemma~\ref{mr1-lem2}, $\Ze(R_{M})$ is not uniserial and therefore, by Lemma~\ref{mr1-lem4}, $\Ze(R_{M})=MR_{M}$, the desired contradiction. So, in both cases $R_{M}$ is a chained ring. Therefore, $R$ is an arithmetical ring and, consequently, $(\mathcal{C}_{2})$ is satisfied (since $R$ is trivially residually coherent).
\hfill$\Box$\end{pot}

As a first application of Theorem~\ref{mr1}, the next corollary handles the special case of reduced rings.

\begin{corollary}\label{reduced}
Let $R$ be a reduced ring. Then, $R$ is residually semi-regular if and only if $R$ is either a Pr\"ufer domain or a von Neumann regular ring.
\end{corollary}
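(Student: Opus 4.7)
The plan is to apply Theorem~\ref{mr1} and use the reduced hypothesis to collapse each of its two alternatives $(\mathcal{C}_{1})$ and $(\mathcal{C}_{2})$ to one of the two listed conclusions, the converse being essentially standard. For sufficiency, if $R$ is a Pr\"ufer domain, every $R/I$ with $I$ a nonzero finitely generated proper ideal is semi-regular by Matlis \cite[Proposition 5.3]{M85}; if $R$ is von Neumann regular, each $R/I$ is again von Neumann regular, hence reduced, and by the observation in the introduction that semi-regularity and von Neumann regularity coincide on reduced rings, $R/I$ is semi-regular.

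For necessity, apply Theorem~\ref{mr1}. Under $(\mathcal{C}_{1})$, $M^{2}=0$ forces every $x\in M$ to satisfy $x^{2}=0$, so reducedness gives $M=0$ and $R$ is a field, which is both Pr\"ufer and von Neumann regular. Under $(\mathcal{C}_{2})$, $R$ is arithmetical, so each $R_{M}$ is a reduced chained ring; a short check (if $ab=0$ with $a=bc$, then $a^{2}=abc=0$, so $a=0$) upgrades $R_{M}$ to a valuation domain. If $R$ is itself a domain, an arithmetical domain is a Pr\"ufer domain and we are done.

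If $R$ is not a domain, I aim to show that every $R_{M}$ is a field, which yields global von Neumann regularity. Since $R_{M}$ is a valuation domain, the primes of $R$ contained in $M$ are totally ordered with a unique minimum $P(M)$; since $(0)$ is not prime in $R$, $P(M)\neq 0$, and since $P(M)R_{M}$ is the unique minimal prime of the domain $R_{M}$, it must equal $(0)$. Thus any $0\neq b\in P(M)$ lies in $\Ker(R\rightarrow R_{M})$, making this kernel nonzero, and the final clause of $(\mathcal{C}_{2})$ forces $R_{M}$ to be semi-regular; being reduced and a domain, $R_{M}$ is a field. The main obstacle is precisely this non-domain branch: ``locally a valuation domain'' alone is not enough to force von Neumann regularity, and one must use the minimal-prime analysis to activate the kernel clause of $(\mathcal{C}_{2})$ at every $M\in\Max(R)$ in order to promote each $R_{M}$ from a valuation domain all the way to a field.
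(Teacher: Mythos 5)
Your proof is correct and follows essentially the same route as the paper: reduce to Theorem~\ref{mr1}, collapse $(\mathcal{C}_{1})$ to the field case via reducedness, observe that a reduced arithmetical ring is locally a valuation domain, and finish with the fact that a semi-regular domain is a field. The only differences are that you verify ``reduced chained $\Rightarrow$ valuation domain'' by a direct computation rather than citing \cite[Theorems 3.4 \& 3.5]{BG}, and you spell out the minimal-prime/kernel argument activating the last clause of $(\mathcal{C}_{2})$ when $R$ is not a domain --- a step the paper's terse proof leaves implicit; both are fine.
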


\begin{proof}
An arithmetical reduced ring has weak global dimension $\leq 1$ \cite[Theorem 3.5]{BG}, and hence it is locally a (valuation) domain \cite[Theorem 3.4]{BG}.
A combination of this result with the basic fact ``that a semi-regular domain is a field" leads to the conclusion via Theorem~\ref{mr1}.
\end{proof}

As a straightforward application of Theorem~\ref{mr1} or Corollary~\ref{reduced}, we recover Matlis' result which solved Zaks' conjecture on residually semi-regular domains.

\begin{corollary}[{\cite[Theorem, p. 371]{M85}}]\label{zak}
A  domain $R$ is residually semi-regular if and only if $R$ is Pr\"ufer.
\end{corollary}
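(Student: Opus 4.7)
The plan is to derive this corollary as an immediate consequence of Corollary~\ref{reduced}, exploiting the fact that any domain is reduced. Corollary~\ref{reduced} asserts that a reduced ring is residually semi-regular precisely when it is either a Pr\"ufer domain or a von Neumann regular ring, so the only work is to collapse the second alternative in the presence of the (integral) domain hypothesis.

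For the forward direction, I would assume $R$ is a residually semi-regular domain. Applying Corollary~\ref{reduced} (valid since $R$ is reduced) yields that $R$ is Pr\"ufer or von Neumann regular. In the second case, every nonzero $x\in R$ satisfies $x=xyx$ for some $y\in R$, giving $x(yx-1)=0$ and hence $yx=1$ since $R$ is a domain. Thus $R$ is a field, which is trivially Pr\"ufer. Either way, $R$ is a Pr\"ufer domain.

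For the converse, I would simply invoke Matlis' original result \cite[Proposition 5.3]{M85}, which is already quoted in the paper's introduction: if $R$ is a Pr\"ufer domain, then $R/I$ is semi-regular for every nonzero finitely generated ideal $I$. (Alternatively, one may verify condition $(\mathcal{C}_{2})$ of Theorem~\ref{mr1} directly: a Pr\"ufer domain is arithmetical by definition, the condition on $R_{M}$ is vacuous because $R$ is a domain forces $\Ker(R\to R_{M})=0$ for every maximal ideal $M$, and residual coherence follows from Matlis' aforementioned proposition since semi-regular rings are coherent.)

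No substantial obstacle arises; the only point to be careful about is dispatching the ``von Neumann regular'' alternative coming out of Corollary~\ref{reduced} and remembering that a field satisfies the corollary vacuously (having no nonzero proper ideals at all).
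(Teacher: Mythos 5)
Your proof is correct and follows essentially the same route as the paper, which derives the corollary as a straightforward application of Corollary~\ref{reduced} (or Theorem~\ref{mr1}); the only added content is the easy observation that a von Neumann regular domain is a field, which the paper leaves implicit. For the converse you could equally well cite the ``if'' direction of Corollary~\ref{reduced} itself rather than going back to Matlis' Proposition 5.3, but both are valid.
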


Next, we recover Levy's result on Noetherian rings with self-injective proper homomorphic images. In this vein, recall for convenience that, under Noetherian assumption, semi-regularity coincides with self-injectivity.

\begin{corollary}[{\cite[Theorem]{Levy}}]\label{levy2}
Let $R$ be a Noetherian ring and consider the following conditions:
\begin{enumerate}
\item[$(\mathcal{C}_{1})$] $R$ is a Dedekind domain.
\item[$(\mathcal{C}_{2})$] $R$ is a principal Artinian ring.
\item[$(\mathcal{C}_{3})$] $(R, M)$ is local with $M^{2}=0$ and $l( M)=2$.
\end{enumerate}
Then, $R$ is residually semi-regular if and only if $R$ satisfies $(\mathcal{C}_{1})$ or $(\mathcal{C}_{2})$ or $(\mathcal{C}_{3})$.
\end{corollary}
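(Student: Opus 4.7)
The plan is to specialize Theorem \ref{mr1} to the Noetherian setting and use two classical facts: for a Noetherian ring, semi-regularity coincides with self-injectivity \cite[Proposition 3.4]{M85}, and a Noetherian self-injective ring is quasi-Frobenius (hence Artinian).

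For sufficiency, I would fit each of the three conditions of the corollary into Theorem \ref{mr1}. If $R$ is a Dedekind domain then $R$ is Pr\"ufer, and Corollary \ref{zak} applies. If $R$ is a principal Artinian ring, then $R$ decomposes as a finite direct product of local Artinian principal ideal rings; $R$ is then Noetherian and arithmetical, residually coherent (quotients of a Noetherian ring are Noetherian and hence coherent), and each $R_{M}$ is itself a local Artinian principal ideal ring, which is self-injective and therefore semi-regular. Thus $(\mathcal{C}_{2})$ of Theorem \ref{mr1} applies. Condition $(\mathcal{C}_{3})$ of the corollary is the special case $l(M)=2$ of $(\mathcal{C}_{1})$ of Theorem \ref{mr1}.

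For necessity, I would apply Theorem \ref{mr1} to $R$ and split by cases. Assume first that $(\mathcal{C}_{1})$ of the theorem holds: $(R,M)$ is local with $M^{2}=0$ and $l(M)\leq 2$. If $l(M)=2$, this is exactly $(\mathcal{C}_{3})$ of the corollary. If $l(M)=1$, then $M$ is a simple module hence principal; the complete list of ideals is $\{0,M,R\}$, each principal, and $R$ is Artinian, giving $(\mathcal{C}_{2})$. If $l(M)=0$ then $R$ is a field, giving $(\mathcal{C}_{1})$. Assume now that $(\mathcal{C}_{2})$ of the theorem holds, i.e., $R$ is Noetherian and arithmetical. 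If $R$ is a domain, then Noetherian Pr\"ufer implies Dedekind, which is $(\mathcal{C}_{1})$. Otherwise, I claim that $R$ is Artinian: for any nonzero $r\in R$, the ring $R/rR$ is Noetherian and semi-regular, hence self-injective and Artinian, so every prime of $R$ containing a nonzero element is maximal; if $\dim R \geq 1$, then since $R$ is not a domain each minimal prime $P_{0}$ is nonzero, and any strict inclusion $P_{0}\subsetneq M$ with $M$ maximal would exhibit a non-maximal prime $P_{0}$ containing some nonzero $r\in P_{0}$, a contradiction. Hence $\dim R=0$ and $R$ is Artinian. Finally, an Artinian arithmetical ring decomposes as a finite direct product of local Artinian chained rings, each of which is a principal ideal ring; consequently $R$ itself is a principal ideal ring, giving $(\mathcal{C}_{2})$.

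The main obstacle I anticipate is the non-domain subcase of $(\mathcal{C}_{2})$: combining the dimension argument (via Artinianity of nonzero proper quotients) with the decomposition of an Artinian ring into its local factors in order to upgrade ``locally principal'' to ``globally principal''. The remaining cases reduce to careful bookkeeping of the Noetherian specialization of Theorem \ref{mr1}, together with the classical equivalences between semi-regularity, self-injectivity, and the quasi-Frobenius property in the Noetherian setting.
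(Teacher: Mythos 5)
Your proposal is correct and follows essentially the same route as the paper: reduce to Theorem~\ref{mr1}, dispose of the domain case via Corollary~\ref{zak} (Noetherian Pr\"ufer $=$ Dedekind), and in the non-domain arithmetical case show $\dim R=0$ and then invoke the structure theorem for Artinian rings (plus stability of the arithmetical property) to conclude that $R$ is principal Artinian. The only minor divergence is in how $\dim R=0$ is obtained: you argue globally from the Artinianness of each proper quotient $R/rR$ (quasi-Frobenius, being Noetherian and semi-regular), whereas the paper localizes at each maximal ideal and applies the Krull intersection theorem in the chained ring $R_{M}$; both mechanisms are sound.
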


\begin{proof}
In view of Corollary~\ref{zak}, we may assume that $R$ is not a domain. For sufficiency, it suffices to consider the case where  $R$ is principal Artinian. Then, obviously, $R$ is arithmetical. Moreover, let $M\in\Max(R)$. Then, $MR_{M}=(t)$ for some $0\neq t\in R_{M}$ with $t^{n}=0$ for some minimal integer $n\geq2$. So, the only nonzero ideals of $R_{M}$ are $(t^{k})$ where $k=1,\dots,n-1$, and one can easily check that $$\Ann_{R_{M}}(\Ann_{R_{M}}(t^{k}))=\Ann_{R_{M}}(t^{n-k})=(t^{k}).$$
Therefore, $R_{M}$ is semi-regular and thus Theorem~\ref{mr1} leads to the conclusion. For necessity, in view of Theorem~\ref{mr1}, we only need to consider the case when $R$ is an arithmetical residually semi-regular ring and check that $R$ is principal Artinian. Indeed, let $M\in\Max(R)$. So, $R_{M}$ is a chained Noetherian ring. If $R_{M}$ is a domain, then it is semi-regular (since $R$ is not a domain) and a fortiori a field. If $R_{M}$ is not a domain, assume $P$ is a non-maximal prime ideal of $R_{M}$. Then,
$$0\subsetneqq P\subseteq \cap_{n\geq1} M^{n}R_{M}=0$$
which is absurd. So, in both cases, we have $\dim(R_{M})=0$. Consequently, $\dim(R)=0$
and thus $R$ is Artinian. It follows that $R$ is principal by the structure theorem for Artinian rings (since the arithmetical property is stable under factor rings), completing the proof of the corollary.
\hfill$\Box$\end{proof}

Another application of Theorem~\ref{mr1}  shows that, in the class of semi-regular rings, the arithmetical property coincides with the notion of residually semi-regular ring.

\begin{corollary}\label{sr}
Let $R$ be a semi-regular ring. Then, $R$ is arithmetical if and only if $R$ is residually semi-regular.
\end{corollary}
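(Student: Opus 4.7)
The plan is a direct two-sided application of Theorem~\ref{mr1}, using the semi-regularity of $R$ to supply the coherence-type hypotheses essentially for free. Since $R$ is semi-regular, $R$ is coherent, and hence residually coherent (by the observation made just after the definition of residual coherence). Moreover, by Matlis' local characterization recalled in the introduction (semi-regular $=$ coherent $+$ locally semi-regular), $R_{M}$ is semi-regular for every $M\in\Max(R)$, which is already strictly stronger than what condition $(\mathcal{C}_{2})$ requires.

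For the ``only if'' direction, assume in addition that $R$ is arithmetical. Then all three clauses of $(\mathcal{C}_{2})$ hold automatically, so Theorem~\ref{mr1} yields that $R$ is residually semi-regular. For the ``if'' direction, assume $R$ is residually semi-regular. Theorem~\ref{mr1} forces $R$ to satisfy $(\mathcal{C}_{1})$ or $(\mathcal{C}_{2})$. In case $(\mathcal{C}_{2})$ there is nothing to do: $R$ is arithmetical by definition. In case $(\mathcal{C}_{1})$, $(R,M)$ is local with $M^{2}=0$ and $l(M)\leq 2$; combined with the semi-regularity of $R$, Lemma~\ref{levy1} collapses this to $l(M)\leq 1$, so $M$ is either zero or a simple (and hence principal) ideal, making $R$ trivially arithmetical.

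The only step that is not immediate from the definitions is case $(\mathcal{C}_{1})$ of the ``if'' direction, which is dispatched in a single line by Lemma~\ref{levy1}. The conceptual point worth emphasizing is that it is the \emph{global} semi-regularity of $R$ that makes both the residual coherence and the $R_{M}$ clause of $(\mathcal{C}_{2})$ hold automatically; one could not replace this by mere local semi-regularity, consistent with the role played by Example~\ref{rc} elsewhere in the paper.
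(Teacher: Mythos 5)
Your proof is correct and follows essentially the same route as the paper, whose own proof is the one-line instruction to combine Theorem~\ref{mr1} with Lemma~\ref{levy1} for one direction and Matlis' characterization of semi-regular rings (coherent plus locally semi-regular) for the other; you have simply written out the details, including the correct observation that semi-regularity collapses case $(\mathcal{C}_{1})$ to $l(M)\leq 1$ via Lemma~\ref{levy1}.
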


\begin{proof}
 Combine Theorem~\ref{mr1} with Lemma~\ref{levy1} for sufficiency and \cite[Proposition 2.1]{M85} for necessity.
\hfill$\Box$\end{proof}

We will appeal to this corollary, in the next section, to provide new examples of residually semi-regular rings, arising as arithmetical semi-regular rings.

\section{Examples}\label{ex}

\noindent We first provide an example of a coherent arithmetical ring which is not residually semi-regular. This shows that the assumption ``$R_{M}$ is semi-regular for every $M\in\Max(R)$ such that $\Ker(R\rightarrow R_{M})\neq0$" within Condition $(\mathcal{C}_{2})$ of Theorem~\ref{mr1} is not redundant with the arithmetical property; and then Couchot's result \cite[Theorem 11]{Cou2003} that ``\emph{a chained ring is residually semi-regular}" does not carry up to (coherent) arithmetical rings.

 A ring is \emph{semi-hereditary} if all its finitely generated ideals are projective. We have the following (irreversible) implications \cite{BG,G1,G2}:
\bigskip

$$\begin{array}{rcccl}
\text{\small Pr\"ufer domain}           &           &                                       &               &\text{\small arithmetical ring}\\
                                        &\searrow   &                                       &\nearrow       &\\
                                        &           &\text{\small semi-hereditary ring}     &\rightarrow    &\text{\small reduced ring}\\
                                        &\nearrow   &                                       &\searrow       &\\
\text{\small von Neumann regular ring}  &           &                                       &               &\text{\small coherent ring}
\end{array}$$
\bigskip

\begin{example}\label{ex1}
A straightforward application of Corollary~\ref{reduced} shows that any semi-hereditary ring with zero-divisors which is not a von Neumann regular ring is a basic example of a coherent arithmetical ring that is not residually semi-regular.
\end{example}

The next example shows that the residual coherence cannot be omitted from Condition $(\mathcal{C}_{2})$ of Theorem~\ref{mr1}; namely, we exhibit an arithmetical and locally semi-regular ring that is not residually coherent (and, a fortiori, not residually semi-regular).

\begin{example}\label{rc}
We borrow our construction from \cite[Example 2.5]{AK}. Let $k$ be a field, $A:=\prod_{i\in\N}F_{i}$ and $I:=\bigoplus_{i\in\N}F_{i}$, where $F_{i}=k\ \forall\ i\in\N$. Let $R:=A\ltimes~\frac{A}{I}$ be the trivial ring extension of $A$ by $A/I$. Any prime ideal $P$ of $R$ has the form $P:=p\ltimes~\frac{A}{I}$, for some prime ideal $p$ of $A$. So, we have $R_{P}\cong A_{p}\ltimes~\frac{A_{p}}{I_{p}}$, which is isomorphic to $k$ if $I\nsubseteq p$ or to $k\ltimes~k$ if $I\subseteq p$. But, $k\ltimes~k$ is clearly principal (since it has a unique nonzero proper ideal $0\ltimes~k$) and quasi-Frobenius by \cite[Theorem 3.6]{Kou} or \cite[Corollary 2.2]{AK}. It follows that $R$ is arithmetical and locally semi-regular, as desired. Now, let $0\not=x\in I$ and observe that $(x,\overline{0})R=xA\ltimes (\overline{0})$ so that $\Ann_{\frac{R}{(x,\overline{0})R}}\left(\overline{(0,\overline{1})}\right)=\frac{I\ltimes~\frac{A}{I}}{(x,\overline{0})R}$ is not finitely generated in $\frac{R}{(x,\overline{0})R}$ since $I\ltimes~\frac{A}{I}$ is not finitely generated in $R$ (due to the fact that $I$ is not finitely generated in $A$). So, $R$ is not residually coherent, as desired.
\end{example}

Next, we use Theorem~\ref{mr1} to construct original examples of non-local coherent residually semi-regular rings beyond Matlis', Levy's, and Couchot's contexts. For this purpose, we investigate the transfer of this notion to trivial extensions.

Recall that the trivial extension of a ring $A$ by an $A$-module $E$ is the ring $R:= A\ltimes E$, where the underlying group is $A \times E$ and the multiplication is given by $(a,e)(b,f) = (ab, af+be)$. The ring $R$ is also called the (Nagata) idealization of $E$ over $A$ and is denoted by  $A(+)E$ \cite{AW,Na}. For more details on trivial ring extensions, we refer the reader to Glaz's book \cite{G1} and Huckaba's books \cite{H}. Recent works investigating various ring-theoretic aspects of these constructions are  \cite{AW,BDS,KLS,Kou,MaMi,NY,Olb}.

Let us first recall an important result from \cite{AK} which establishes the transfer of coherence to trivial ring extensions issued from domains. In this result, we use Fuchs-Salce's definition of a \emph{coherent module}; that is, all its finitely generated submodules are finitely presented \cite[Chapter IV]{FuSa} (i.e., the module itself doesn't have to be finitely generated).

\begin{lemma}[{\cite[Proposition 3.5]{AK}}]\label{coherence}
Let $A$ be a domain which is not a field, $E$ a divisible $A$-module, and $R:=A\ltimes~E$. Then, $R$ is coherent if and only if  $A$ is coherent, $E$ is torsion coherent, and $\Ann_{E}(x)$ is finitely generated for all $x\in A$.
\end{lemma}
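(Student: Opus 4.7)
The plan is to exploit the explicit structure of finitely generated ideals of $R = A \ltimes E$. Divisibility of $E$ over the non-field domain $A$ forces $IE = E$ for every nonzero ideal $I$ of $A$, and a direct calculation then shows that every finitely generated ideal of $R$ has one of two shapes: either $I \ltimes E$ for a nonzero finitely generated $I = (a_1, \ldots, a_n) \subseteq A$, generated in $R$ by the $(a_i, 0)$, or $0 \ltimes F$ for a finitely generated $A$-submodule $F = \sum_i Ae_i \subseteq E$, generated in $R$ by the $(0, e_i)$. This dichotomy, together with the easy observation that $0 \ltimes N$ is a finitely generated $R$-ideal if and only if $N$ is a finitely generated $A$-module, drives both directions.

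For the forward direction, each required property on $A$ and $E$ is extracted by feeding a well-chosen test ideal into coherence of $R$. Taking $J = I \ltimes E$ and projecting the finitely generated $R$-relation module of the generators $(a_i, 0)$ onto the $A$-coordinates yields $\mathrm{Rel}_A(a_1, \ldots, a_n)$ finitely generated over $A$, so $A$ is coherent; similarly $J = 0 \ltimes F$ yields that every finitely generated submodule of $E$ is finitely presented, so $E$ is coherent. For the annihilator conditions, I would compute $\Ann_R((x, 0)) = 0 \ltimes \Ann_E(x)$ for $x \neq 0$ (using that $A$ is a domain) and $\Ann_R((0, e)) = 0 \ltimes E$ when $\Ann_A(e) = 0$; coherence forces these to be finitely generated $R$-ideals, which by the $A$-versus-$R$ equivalence above yields $\Ann_E(x)$ finitely generated, respectively $E$ itself finitely generated. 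The latter, combined with divisibility and Nakayama's lemma applied at any maximal ideal of the non-field $A$, forces $E = 0$, which contradicts $e \neq 0$ and shows $E$ is torsion.

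For the backward direction I would handle the two cases separately. In the case $J = 0 \ltimes F$, the kernel $L$ of the natural $R^n \twoheadrightarrow J$ is $\{((r_i, f_i))_i : (r_i) \in K'\}$ with the $f_i$'s free, where $K' := \mathrm{Rel}_A(e_1, \ldots, e_n)$ is finitely generated over $A$ by coherence of $E$. I would claim that the lifts $((\gamma^{(j)}_i), 0)$ of $A$-generators of $K'$ already generate $L$ as an $R$-module: given $((r_i), (f_i)) \in L$, writing $(r_i) = \sum_j s_j \gamma^{(j)}$ and $(f_i) = \sum_j h_j \gamma^{(j)}$ with $s_j \in A$, $h_j \in E$ reassembles it as $\sum_j (s_j, h_j) \cdot ((\gamma^{(j)}_i), 0)$. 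The crux is the identity $E \cdot K' = E^n$, forced by torsion of $E$ ($\Ann_A(e_i) \neq 0$ places single-coordinate relations $\Ann_A(e_i)\epsilon_i$ inside $K'$) and divisibility ($\Ann_A(e_i) E = E$). In the case $J = I \ltimes E$, I would proceed by induction on $n$: the base case $J = (a_1, 0)R$ is principal with $\Ann_R((a_1, 0)) = 0 \ltimes \Ann_E(a_1)$ finitely generated by hypothesis, hence $J$ is finitely presented; for the inductive step I would write $J = J_0 + (a_n, 0)R$ and apply the standard short-exact-sequence argument, where $J_0 \cap (a_n, 0)R = (I_0 \cap a_n A) \ltimes E$ is finitely generated because coherence of $A$ makes $I_0 \cap a_n A$ finitely generated (and nonzero, since $A$ is a domain) while divisibility collapses its $E$-component into $(b_k, 0)$-style generators.

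The main technical obstacle is an apparent size mismatch: the kernel $L$ always contains the bulk submodule $0 \ltimes E$ in each coordinate slot, which is never finitely generated as an $R$-module unless $E$ itself is finitely generated over $A$. The resolution — and the technical heart of the proof — is that these bulk contributions are absorbed by the $E$-part of the $R$-action, via $(0, h) \cdot ((\gamma^{(j)}_i), 0) = (0, (h\gamma^{(j)}_i))$, so torsion plus divisibility makes the second-coordinate $R$-orbit of a finite generating set of $K'$ exhaust $E^n$ without requiring $E$ to be finitely generated. An entirely analogous absorption in Case A handles the quotient $K_E/(E \cdot K_A) \cong \mathrm{Tor}_1^A(A/I, E)$ through the hypothesis that $\Ann_E(a_j)$ is finitely generated.
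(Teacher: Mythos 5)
Your proposal is correct. Note first that the paper does not actually prove this lemma --- it imports it verbatim from \cite[Proposition 3.5]{AK} --- so there is no in-paper argument to measure yours against; what you have written is an independent, self-contained verification, and it follows the route one would expect the cited source to take. The two pillars of your argument are sound: (1) divisibility of $E$ over the non-field domain $A$ forces every finitely generated ideal of $R$ to be either $I\ltimes E$ with $0\neq I$ finitely generated (re-generated by the $(a_i,0)$, since $0\ltimes E$ is already inside) or $0\ltimes F$ with $F$ finitely generated in $E$, and $0\ltimes N$ is finitely generated over $R$ exactly when $N$ is over $A$; (2) in the backward direction the identity $E\cdot K'=E^{n}$, obtained from $\Ann_{A}(e_{i})\neq 0$ (torsion) together with $\Ann_{A}(e_{i})E=E$ (divisibility), is precisely what absorbs the non-finitely-generated slices $0\ltimes E$ sitting inside the kernel, and the induction on the number of generators of $I$ via $J_{0}\cap J_{1}=(I_{0}\cap a_{n}A)\ltimes E$ (nonzero because $A$ is a domain, finitely generated because $A$ is coherent, and generated by elements of the form $(b_{k},0)$ by divisibility) correctly reduces the case $I\ltimes E$ to the principal case, where $\Ann_{R}((a,0))=0\ltimes\Ann_{E}(a)$ invokes the hypothesis on $\Ann_{E}(a)$. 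Two minor points to make explicit: the condition ``$\Ann_{E}(x)$ finitely generated for all $x\in A$'' must be read for nonzero $x$ (for $x=0$ it would force $E$ finitely generated, hence $E=0$ by your own Nakayama argument), which is how you in fact use it; and in the inductive step you should dispose of the degenerate situations (some $a_{i}=0$, or $I_{0}=0$) by discarding redundant generators, which is immediate since finite presentation is independent of the chosen finite generating set. With those readings, the proof is complete.
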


The next result investigates the transfer of the notion of residually semi-regular ring to trivial ring extensions issued from local rings.

\begin{proposition}\label{trivial}
Let $(A,M)$ be a local ring, $E$ a nonzero $A$-module, and $R:=A\ltimes~E$. Consider the following conditions:
\begin{enumerate}
\item[$(\mathcal{C}_{1})$] $A$ is a field and $\dim_{A}(E)\leq 2$.
\item[$(\mathcal{C}_{2})$] $M^{2}=0$ with $l( M)=1$ and $E\cong A/M$.
\item[$(\mathcal{C}_{3})$] $A$ is a non-trivial valuation domain, $E$ is a uniserial divisible torsion coherent module, and $\Ann_{E}(x)$ is finitely generated for all $x\in A$.
\end{enumerate}
Then, $R$ is a coherent residually semi-regular ring if and only if any one of the above three conditions holds.
\end{proposition}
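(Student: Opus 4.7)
The plan is to apply Theorem~\ref{mr1} to the local ring $R=A\ltimes E$ with maximal ideal $\mathcal{M}:=M\ltimes E$. Since $R$ is local, the kernel condition in $(\mathcal{C}_{2})$ of Theorem~\ref{mr1} is automatic, so $R$ is residually semi-regular if and only if either (a) $\mathcal{M}^{2}=0$ with $l(\mathcal{M})\leq 2$, or (b) $R$ is arithmetical (hence chained) and residually coherent. The strategy is to match case (a) with the proposition's $(\mathcal{C}_{1})$ or $(\mathcal{C}_{2})$, and case (b) with $(\mathcal{C}_{3})$.

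For sufficiency under $(\mathcal{C}_{1})$ or $(\mathcal{C}_{2})$, a short computation shows $\mathcal{M}^{2}=M^{2}\ltimes ME=0$ and gives the $R$-module direct sum decomposition $\mathcal{M}=(M\ltimes 0)\oplus(0\ltimes E)$ of length $l_{A}(M)+\dim_{A/M}(E)\leq 2$; Theorem~\ref{mr1} then applies, and $R$ has finite length, hence is coherent. For sufficiency under $(\mathcal{C}_{3})$, divisibility of $E$ together with $A$ being a domain forces every ideal $J\subseteq R$ with nonzero projection to $A$ to contain $0\ltimes E$ (pick $(a,e)\in J$ with $a\neq 0$ and use $(0,f)(a,e)=(0,af)$ with $aE=E$) and hence to be of the shape $I\ltimes E$, while the remaining ideals are $0\ltimes F$ with $F$ a submodule of $E$; the chain of $A$-ideals and the chain of $E$-submodules together imply $R$ is chained, yielding residual coherence, and Theorem~\ref{mr1} delivers residual semi-regularity. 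Coherence of $R$ here comes from Lemma~\ref{coherence}.

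For necessity, Theorem~\ref{mr1} places $R$ in (a) or (b). In (a), $\mathcal{M}^{2}=0$ forces $M^{2}=0$ and $ME=0$, so $E$ becomes an $A/M$-vector space and $l(\mathcal{M})=l_{A}(M)+\dim_{A/M}(E)\leq 2$; if $M=0$ this is $(\mathcal{C}_{1})$, and if $M\neq 0$ both summands must equal $1$, giving $(\mathcal{C}_{2})$. In (b), comparability of the principal ideals $(0,e)R=0\ltimes Ae$ and $(a,0)R=Aa\ltimes aE$ rules out $(a,0)R\subseteq(0,e)R$ when $a\neq 0$ (that would force $a=0$), hence $Ae\subseteq aE$ and thus $E=aE$ for every nonzero $a\in A$: that is, $E$ is divisible. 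The analogous comparability of $(a,0)R$ and $(b,0)R$ shows principal $A$-ideals are linearly ordered, so $A$ is chained. The key step is to show $A$ is a domain: a nonzero nilpotent $a$ with $a^{k}=0$ would, by iterating $aE=E$, yield $0=a^{k}E=E$, contradicting $E\neq 0$; so $A$ is reduced, and in a local chained ring the nilradical coincides with the unique minimal prime, making $A$ a domain. If $A$ is a field we recover $(\mathcal{C}_{1})$; otherwise $A$ is a non-trivial valuation domain, and Lemma~\ref{coherence} applied to the coherent ring $R$ supplies the remaining requirements of $(\mathcal{C}_{3})$ (torsion coherence of $E$ and finiteness of $\Ann_{E}(x)$), while $E$ is uniserial because the ideals $0\ltimes F$ of $R$ form a chain.

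The main obstacle is the ``$A$ is a domain'' step in case (b). The divisibility-versus-nilpotency argument, though short, is the non-obvious ingredient, and it must be combined with the observation that in a local chained ring the nilradical coincides with the unique minimal prime to pass from ``reduced'' to ``domain''. Without this step one cannot invoke Lemma~\ref{coherence} to extract the torsion, coherence, and annihilator conditions of $(\mathcal{C}_{3})$. Everything else reduces to careful bookkeeping of the ideals of the trivial extension, together with straightforward applications of Theorem~\ref{mr1}.
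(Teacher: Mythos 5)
Your proposal is correct and follows the same skeleton as the paper's proof: apply Theorem~\ref{mr1} to the local ring $R=A\ltimes E$ (where the kernel condition is vacuous) and match the alternative ``$\mathcal{M}^{2}=0$ with $l(\mathcal{M})\leq 2$'' to conditions $(\mathcal{C}_{1})$--$(\mathcal{C}_{2})$ and the alternative ``$R$ chained'' to $(\mathcal{C}_{3})$. The difference is one of self-containedness: where the paper delegates the key identifications to citations --- \cite[Proposition 1.1]{Cou2015} for the characterization of when $A\ltimes E$ is a chained ring, \cite[Theorem 3.1]{BKM} for the field case, and \cite[Theorem 2.6]{KM2} for coherence in the finite-length cases --- you re-derive them directly: the ideal-theoretic description of $A\ltimes E$ when $E$ is divisible, the extraction of divisibility of $E$ and comparability of principal ideals of $A$ from chainedness of $R$, and in particular the nilpotency-versus-divisibility clash showing $A$ is reduced, hence (being local and chained, so with a unique minimal prime equal to the nilradical) a domain. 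This last point is exactly what the paper leaves buried inside Couchot's cited proposition, and making it explicit is the right move, since Lemma~\ref{coherence} cannot be invoked before $A$ is known to be a domain and $E$ divisible. The paper's route is shorter; yours is elementary and verifiable without outside references. One small point: when $A$ is a field in the chained case, you should say explicitly that uniseriality of $E$ over a field forces $\dim_{A}(E)=1$, which is how $(\mathcal{C}_{1})$ is recovered there; the paper cites \cite[Theorem 3.1]{BKM} for this, but it already follows from your observation that the ideals $0\ltimes F$ form a chain.
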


\begin{proof}
 Assume that $R$ is residually semi-regular. By Theorem~\ref{mr1}, $(M\ltimes E)^{2}=0$ with $l(M\ltimes E)=2$ or $R$ is a chained ring. The first case yields $M^{2}=0$ and $ME=0$ (i.e., $E$ is an $A/M$-vector space) with $l(M)+l(E)=2$. It follows that either $A$ is a field with $l(E)=2$ (i.e., $\dim_{A}(E)=2$) or $l(M)=1$ and $l(E)=1$ (i.e., $E\cong A/M$). Next, assume that $R$ is a chained ring. If $A$ is a field, then $\dim_{A}(E)=1$ by \cite[Theorem 3.1]{BKM}. If $A$ is not a field, then a combination of \cite[Proposition 1.1]{Cou2015} and Lemma~\ref{coherence} leads to the conclusion. Conversely, suppose that $(\mathcal{C}_{1})$ or $(\mathcal{C}_{2})$ holds. Then, $R$ is coherent by  \cite[Theorem 2.6]{KM2}, and $(M\ltimes E)^{2}=0$ with $l(M\ltimes E)=2$. By Theorem~\ref{mr1}, $R$ is residually semi-regular. Next, suppose that $(\mathcal{C}_{3})$ holds. By Lemma~\ref{coherence}, $R$ is coherent and, by \cite[Proposition 1.1]{Cou2015}, $R$ is  a chained ring and hence residually semi-regular by Theorem~\ref{mr1}.
\hfill$\Box$\end{proof}

Notice that coherent residually semi-regular rings issued via $(\mathcal{C}_{1})$ or $(\mathcal{C}_{2})$ of Proposition~\ref{trivial} are necessarily Noetherian. However, one may use $(\mathcal{C}_{3})$ to provide examples of non-local non-Noetherian coherent residually semi-regular rings with zero-divisors (i.e., beyond Matlis', Levy's, and Couchot's contexts), as shown below.

\begin{example}\label{ex2}
Let $A$ be a non-local non-Noetherian Pr\"ufer domain, $E:=\frac{Q(A)}{A}$, and $R:=A\ltimes E$. Then $R$ is a non-local non-reduced non-Noetherian coherent residually semi-regular ring. Indeed, $R$ is not reduced (as it is the case of any trivial extension) and it is neither local nor Noetherian since $A$ is not. Moreover, $R$ is a semi-regular (and, a fortiori, coherent) ring by \cite[Example 3.12]{AK}. Next, let $M\in\Max(R)$. Then, $M=\m\ltimes E$, for some maximal ideal $\m$ of $A$ and hence
$$R_{M}=A_{\m}\ltimes E_{\m}=A_{\m}\ltimes\frac{Q(A_{\m})}{A_{\m}}$$
with $A_{\m}$ being a valuation domain. Now, $Q(A_{\m})$ is a coherent $A_{\m}$-module (since it is torsion-free) and so is $E_{\m}$. Moreover, $E_{\m}$ is clearly a divisible torsion module and
$$\Ann_{E_{\m}}(x)=\overline{(1/x)}A_{\m}$$
for any nonzero $x\in A_{\m}$. It follows that $R_{M}$ is residually semi-regular by Proposition~\ref{trivial}. Consequently, $R$ is locally residually semi-regular and hence residually semi-regular by Corollary~\ref{sr}, since semi-regularity is stable under localization and the arithmetical notion is a local property.
\end{example}

\section*{Acknowledgments}

\noindent We are grateful to the referee for crucial suggestions and comments which enabled to remove the ``coherence" assumption in the initial statements of Lemmas 2.3, 2.5, and 2.6, and hence of the main theorem; and then replace it with the residual coherence in Condition $(\mathcal{C}_{2})$. Particularly, we owe to the referee the proof of the second statement of Lemma 2.5.



\begin{thebibliography}{99}
\par\bibitem{AJK}   J. Abuihlail, M. Jarrar, and S. Kabbaj, Commutative rings in which every finitely generated ideal is quasi-projective, J. Pure Appl. Algebra  215 (2011) 2504--2511.
\par\bibitem{AK}   K. Adarbeh and S. Kabbaj, Matlis' semi-regularity in trivial ring extensions, submitted for publication. $\langle$arXiv:1604.02795$\rangle$
\par\bibitem{AW}    D. D. Anderson and M. Winders, Idealization of a module, J. Commut. Algebra 1 (1) (2009) 3--56.
\par\bibitem{BKM}   C. Bakkari, S. Kabbaj, and N. Mahdou, Trivial extensions defined by Pr\"ufer conditions, J. Pure Appl. Algebra  214 (2010) 53--60.
\par\bibitem{BDS}   V. Barucci , M. D'Anna, and  F. Strazzanti, A family of quotients of the Rees algebra, Comm. Algebra  43 (1) (2015) 130--142.
\par\bibitem{BG}    S. Bazzoni and S. Glaz, Pr\"ufer rings, in: Multiplicative Ideal Theory in Commutative Algebra: A tribute to the work of Robert Gilmer, Springer, New York, 2006, pp. 263--277.
\par\bibitem{BG2}   S. Bazzoni and S. Glaz, Gaussian properties of total rings of quotients, J. Algebra  310 (2007) 180--193.
\par\bibitem{Col}   R. R. Colby, Rings which have flat injective modules, J. Algebra 35 (1975) 239--252.
\par\bibitem{Cou2003CA}   F. Couchot, The $\lambda$-dimension of commutative arithmetic rings, Comm. Algebra 31(7) (2003) 3143--3158.
\par\bibitem{Cou2003}   F. Couchot, Injective modules and fp-injective modules over valuation rings, J. Algebra 267 (2003) 359--376.
\par\bibitem{Cou2015}    F. Couchot, Gaussian trivial ring extensions and fqp-rings, Comm. Algebra  43 (7)  (2015) 2863--2874.
\par\bibitem{FF}    A. Facchini and C. Faith, FP-injective quotient rings and elementary divisor rings, Lecture Notes in Pure and Appl. Math., Dekker, 185 (1995) 293--302.
\par\bibitem{Fu}    L. Fuchs, Uber die Ideale arithmetischer Ringe, Comment. Math. Helv.  23 (1949) 334--341.
\par\bibitem{FuSa}    L. Fuchs and L. Salce, Modules over non-Noetherian domains. Mathematical Surveys and Monographs, 84. American Mathematical Society, Providence, RI, 2001.
\par\bibitem{G1}    S. Glaz, Commutative Coherent Rings, Lecture Notes in Mathematics, 1371, Springer-Verlag, Berlin, 1989.
\par\bibitem{G2}    S. Glaz, Finite conductor rings, Proc. Amer. Math. Soc. 129 (2000) 2833--2843.
\par\bibitem{H}     J. A. Huckaba, Commutative Rings with Zero-Divisors, Marcel Dekker, New York, 1988.
\par\bibitem{Jain}    S. Jain, Flat and fp-injectivity, Proc. Amer. math. Soc. 41 (2) (1973) 437--442.
\par\bibitem{Jen}     C. U. Jensen, Arithmetical rings, Acta Math. Hungr.  17 (1966) 115--123.
\par\bibitem{KM2}    S. Kabbaj and N. Mahdou, Trivial extensions defined by coherent-like conditions, Comm. Algebra  32 (10) (2004) 3937--3953.
\par\bibitem{KLS}    L. Klingler, T Lucas, and M. Sharma, Maximally Pr\"ufer rings, Comm. Algebra  43 (1) (2015) 120--129.
\par\bibitem{Kou}   F. Kourki, Sur les extensions triviales commutatives, Ann. Math. Blaise Pascal 16 (1) (2009) 139--150.
\par\bibitem{Levy}    L. S. Levy, Commutative rings whose homomorphic images are self-injective, Pacific J. Math. 18 (1966) 149--153.
\par\bibitem{MaMi} N. Mahdou and A. Mimouni, Well-centered overrings of a commutative ring in pullbacks and trivial extensions, Rocky Mount. J. Math. 42 (1) (2012) 223--234.
\par\bibitem{M82}    E. Matlis, Commutative coherent rings, Canad. J. Math. 34 (6) (1982) 1240--1244.
\par\bibitem{M85}    E. Matlis, Commutative semi-coherent and semi-regular rings, J. Algebra 95 (2) (1985) 343--372.
\par\bibitem{Na}    M. Nagata, Local rings, Interscience Tracts in Pure and Applied Mathematics, No. 13, Interscience Publishers, New York-London, 1962.
\par\bibitem{NY}    S. Nasseh and Y. Yoshino, On Ext-indices of ring extensions, J. Pure Appl. Algebra 213 (2009) 1216--1223.
\par\bibitem{Olb}   B. Olberding, A counterpart to Nagata idealization, J. Algebra 365 (2012) 199--221.
\par\bibitem{Sabb}  G. Sabbagh, Embedding problems for modules and rings with applications to model-companions, J. Algebra 18 (1971) 390--403.
\par\bibitem{T}     H. Tsang, Gauss's Lemma, Ph.D. thesis, University of Chicago, Chicago, 1965.

\par
\end{thebibliography}
\end{document}